\def\cvd{\hfill$\Box$}
\def\corr{\longleftrightarrow}
\def\int{\mathbb{Z}}
\def\C{\mathbb{C}}
\def\R{\mathbb{R}}
\def\Z{\mathbb{Z}}
\def\N{\mathbb{N}}
\def\e{\varepsilon}
\def\Ue{{\cal U}_{\varepsilon}({\mathfrak g})}
\def\Aut{\text{Aut}~}
\def\ch{{\rm{char}~}}
\def\sph{\text{sph}}
\def\O{{\cal O}}
\def\cu{{\cal CU}}
\def\h{{\mathfrak h}}
\def\K{{k}}
\def\proof{\noindent{\bf Proof. }}
\def\Pf{\proof}
\def\pf{\proof}
\def\rk{{\rm rk}}
\def\ov{\overline}
\def\a{\alpha}
\def\f{\varphi}
\def\b{\beta}
\def\d{\delta}
\def\l{\lambda}
\def\g{\gamma}
\def\s{\sigma}
\def\r{\rho}
\def\t{\tau}
\def\<#1{\langle #1\rangle}
\def\o#1{|\ \! #1\!\ |}
\def\wJ{w_{\!_J}}
\def\vuoto{\varnothing}
\title{A classification of unipotent spherical conjugacy classes\\ in bad characteristic}
\newtheorem{theorem}{Theorem}[section]
\newtheorem{lemma}[theorem]{Lemma}
\newtheorem{corollary}[theorem]{Corollary}
\newtheorem{proposition}[theorem]{Proposition}
\newtheorem{definition}[theorem]{Definition}
\newtheorem{remark}[theorem]{Remark}
\newcounter{tigre}
\def\totable{\refstepcounter{tigre}\arabic{tigre}}
\author{Mauro Costantini\\
Dipartimento di Matematica Pura ed Applicata\\
Torre Archimede - via Trieste 63 - 35121 Padova - Italy\\
email: costantini@math.unipd.it }
\date{}
\begin{document}
\baselineskip=20pt
\maketitle
\begin{abstract}
  Let $G$ be a simple algebraic group over an
  algebraically closed field $k$ of bad characteristic. We classify the spherical unipotent conjugacy classes of $G$. We also show that if the characteristic of $k$ is 2, then the fixed point subgroup of every involutorial automorphism (involution) of $G$ is a spherical subgroup of $G$.
   \end{abstract}
\section{Introduction}

\newcounter{equat}
\def\theequat{(\arabic{equat})}
\def\equat{\refstepcounter{equat}$$~}
\def\endequat{\leqno{\boldsymbol{(\arabic{equat})}}~$$}

\newcommand{\elem}[1]{\stackrel{#1}{\longto}}
\newcommand{\map}[1]{\stackrel{#1}{\to}}
\def\imp{\Rightarrow}
\def\Imp{\Longrightarrow}
\def\iff{\Leftrightarrow}
\def\Iff{\Longleftrightarrow}
\def\to{\rightarrow}
\def\longto{\longrightarrow}
\def\injto{\hookrightarrow}
\def\rtordu{\rightsquigarrow}

Let $G$ be a simple algebraic group over an algebraically closed field $\K$, with Lie algebra $\mathfrak g$. In this paper we determine the unipotent spherical conjugacy classes of $G$ (we recall that a conjugacy class $\O$ in $G$ is
called {\it spherical} if a Borel subgroup of $G$ has a
dense orbit on $\O$) when the characteristic of $k$ is bad for $G$. There has been a lot of work related to this field in the cases of good characteristic.

To fix the notation, $B$ is a Borel subgroup of $G$, $T$ a maximal torus of $B$, $B^-$ the Borel subgroup opposite to $B$, $\{\alpha_1,\ldots,\alpha_n\}$ the set of simple roots with respect to the choice of $(T,B)$.
Let $W$ be the Weyl group of $G$ and let us denote by
$s_i$ the reflection corresponding to the simple root $\alpha_i$: 
$\ell(w)$ is the length of the element $w\in W$ and $\rk(1-w)$ is the rank of $1-w$ in the geometric representation of $W$. 

Initially, spherical $G$-orbits have been studied in the context of Lie algebras (\cite{pany}, \cite{pany2}) in characteristic zero. The classification of spherical nilpotent orbits has been obtained by Panyushev: in terms of height, a nilpotent orbit $\O\subset \mathfrak g$ is spherical if and only if its height is at most 3, which means 2 or 3 if $\O$ is not the zero orbit.  Equivalently, $\O$ is spherical if and only if it contains an element of the form $e_{\gamma_1}+\cdots +e_{\gamma_t}$, where $\gamma_1,\ldots,\gamma_t$ are pairwise orthogonal simple roots
(Panyushev \cite{pany}, \cite{pany5}).

More recently, in \cite{CCC}, we put our attention to spherical conjugacy classes in $G$ over $\C$. We classify all spherical conjugacy classes by means of the Bruhat decomposition:
a conjugacy class $\O\subset G$ is spherical if and only if $\dim{\cal O}=\ell(w)+rk(1-w)$, where $w=w(\O)$ is the unique element of $W$ such that $\O\cap BwB$ is open dense in $\O$ (we observe that the classification given in \cite{CCC} over the complex numbers, holds in general for characteristic zero).

In \cite{FR} the authors classify spherical nilpotent orbits in good characteristic using Kempf-Russeau theory: the classification is the same as in the case of zero characteristic. In \cite{Giovanna-good}, the author obtains the classification of all spherical conjugacy classes in good, odd or zero characteristic by means of the Bruhat decomposition and by exploiting another characterization of spherical conjugacy classes available in good odd or zero characteristic, namely a conjugacy class $\O$ is spherical if and only if $\{y\in W\mid \O\cap ByB\not=\emptyset\}\subseteq\{w\in W\mid w^2=1\}$ (\cite{Car}, Theorem 2.7, \cite{Car2}, Theorem 5.7).

In the present paper we complete the picture for unipotent spherical conjugacy classes by considering bad characteristic. Our strategy is to exhibit for each group $G$ a set $\O(G)$ of unipotent conjugacy classes, show that each element in $\O(G)$ is spherical, and finally show that each conjugacy class not in $\O(G)$ is not spherical. It turns out that in bad characteristic the classification of spherical unipotent conjugacy classes is the ``same'' as in zero characteristic, unless $p=2$ in type $C_n$ and $F_4$, or $p=3$ in type $G_2$. In these cases there are more classes than in characteristic zero. In particular, if $p=2$ then the conjugacy class of a non-trivial unipotent element $u$ is spherical if and only if $u$ is an involution.

It is well known that in zero or odd characteristic the fixed point subgroup $H$ of any involutory automorphism $\s$ of $G$ is a spherical subgroup (i.e. $G/H$ is a spherical homogeneous space). This was proved by Vust in \cite{Vust} in characteristic zero (see also \cite{Matsuki} over $\C$). Then Springer extended the result to odd characteristic in \cite{Springerinv}. In \cite{Seitz}, Seitz gives an alternative proof of Springer's result. Here we prove that the result also holds in characteristic 2.

The paper is structured as follows. In Section \ref{bepi} we introduce the notation. In Section \ref{classificazione} we recall some basic
facts about the classification of unipotent conjugacy classes in bad characteristic and determine the spherical ones. We also give the list of all spherical unipotent conjugacy classes $\O$ for which there is an element $u$ in $\O$ of the form $u= x_{\gamma_1}(1)\cdots x_{\gamma_t}(1)$, where $\gamma_1,\ldots,\gamma_t$ are pairwise orthogonal simple roots.

In Section \ref{simmetrici} we prove that if $G$ is a reductive connected algebraic group in characteristic 2, and $\s$ is any involutory automorphism of $G$, then  the fixed point subgroup $H$ of $\s$ is a spherical subgroup of $G$.

\setcounter{equation}{0}
\section{Preliminaries}\label{bepi}

We denote by $\C$ the complex numbers, by $\R$ the reals, by
$\Z$ the integers and by $\N$ the natural numbers. 

Let $A = (a_{ij})$ be a finite indecomposable
Cartan matrix of rank $n$ with associated root system $\Phi$, and let $k$ be an algebraically closed field of characteristic  $\ch k=p$. Let $G$ be a simple algebraic group over $\K$ associated to $A$, with
 Lie 
algebra
${\mathfrak g}$. We fix a maximal torus $T$ of $G$, and a Borel subgroup $B$
 containing $T$:  $B^-$ is the Borel subgroup opposite to $B$, $U$ (respectively $U^-$) is the unipotent radical of $B$ (respectively of $B^-$).  We denote by $\h$ the Lie
algebra of $T$. Then  $\Phi$ is the set of roots relative
to $T$, and $B$ determines the set of positive roots  
$\Phi^+$, and the simple roots $\Delta=\{\alpha_1,\ldots,\alpha_n\}$.  We fix a total ordering on $\Phi^+$ compatible with the height function.
We shall use the numbering and the description of the simple roots in terms of the canonical basis $(e_1,\ldots, e_k)$ of an appropriate $\R^k$ as in \cite{bourbaki}, Planches I-IX. For the exceptional groups, we shall write $\b=(m_1,\ldots,m_n)$ for $\b=m_1\a_1+\ldots+m_n\a_n$.
We denote by $P$ the weight lattice, by $P^+$ the monoid of dominant weights and by $W$ the Weyl
group; $s_i$ is the simple reflection associated to $\alpha_i$,
$\{\omega_1,\ldots,\omega_n\}$ are the fundamental weights, $w_0$ is the longest element of $W$. 
The real space $E=\R P$  is a Euclidean space, endowed with the scalar product
$(\a_i,\alpha_j) = d_ia_{ij}$. Here
$\{d_1,\ldots,d_n\}$  are
relatively prime positive integers such that if $D$ is the
diagonal matrix with entries $d_1,\ldots,d_n$, then $DA$ is
symmetric. 

 We put
$\Pi=\{1,\ldots,n\}$ and we fix a Chevalley basis $\{h_{i},  i\in \Pi; e_\a,\a\in\Phi\}$
 of $\frak g$.

We use the notation $x_\a(\xi)$, $h_\a(z)$, for $\a\in \Phi$, $\xi\in \K$, $z\in\K^\ast$ as in \cite{yale}, \cite{Carter1}. For $\a\in \Phi$ we put $X_\a=\{x_\a(\xi)\mid \xi\in \K\}$, the root-subgroup corresponding to $\a$, and $H_\a=\{h_\a(z)\mid z\in \K^\ast\}$. 
  We identify $W$ with $N/T$, where $N$ is the normalizer of $T$:  given an element $w\in W$ we shall denote
a representative of $w$ in $N$ by $\dot{w}$. We choose the $x_\a$'s  so that, for all $\a\in \Phi$, $n_\a=x_\a(1)x_{-\a}(-1)x_\a(1)$
lies in $N$ and has image the reflection $s_\a$ in $W$. Then 
\begin{equation}
x_\a(\xi)x_{-\a}(-\xi^{-1})x_\a(\xi)=h_{\a}(\xi)n_\a\quad,\quad
n_\a^2=h_{\a}(-1)
\label{relazioni}
\end{equation}
for every $\xi\in\K^\ast$, $\a\in\Phi$ (\cite{springer}, Proposition 11.2.1).


For algebraic groups we use the notation in \cite{Hu2}, \cite{Carter2}. In particular, 
for $J\subseteq \Pi$, $\Delta_J=\{\a_j\mid j\in J\}$, $\Phi_J$ is the corresponding root system, $W_J$ the Weyl group, $P_J$ the standard parabolic subgroup of $G$, $L_J=T\<{X_\a\mid \a\in \Phi_J}$ the standard Levi subgroup of $P_J$.
For $z\in W$ we put 
$
U_z=U\cap z^{-1}U^-z$.
Then the unipotent radical $R_uP_J$ of $P_J$ is $U_{w_0\wJ}$,
where $\wJ$ is the longest element of $W_J$.
Moreover
$
U\cap L_J=U_{\wJ}$
is a maximal unipotent subgroup of $L_J$.

If $\Psi$ is a subsystem of type $X_r$ of $\Phi$ and $H$ is the subgroup generated by $X_\a$, $\a\in \Psi$, we say that $H$ is a $X_r$-subgroup of $G$.

If $X$ is  $G$-variety and $x\in X$, we denote by $G.x$ the $G$-orbit of $x$ and by  $G_x$ the isotropy subgroup of $x$ in $G$. If the homogeneous space $G/H$ is spherical, we say that $H$ is a spherical subgroup of $G$.

If $x$ is an element of a group $K$ and $H\leq K$, we shall also denote by $C(x)$ the centralizer of $x$ in $K$, and by $C_H(x)$ the centralizer of $x$ in $H$. If $x$, $y\in K$, then $x\sim y$ means that $x$, $y$ are conjugate in $K$.
For unipotent classes in exceptional groups we use the notation in \cite{Carter2}. We use the description of centralizers of involutions as in \cite{Iwa}, \cite{AS}.

We denote by $\Z_r$ the cyclic group of order $r$.

For each conjugacy class $\O$ in $G$, $w=w(\O)$ is the unique element of $W$ such that $BwB\cap \O$ is open dense in $\O$.

\setcounter{equation}{0}
\section{The classification}\label{classificazione}

We recall that
the bad primes for the individual types of simple groups are as follows:

none when $G$ has type $A_n$;

$p=2$ when $G$ has type $B_n$, $C_n$, $D_n$;

$p=2$ or $3$ when $G$ has type $G_2$, $F_4$, $E_6$, $E_7$;

$p=2$, $3$ or $5$ when $G$ has type $E_8$.

\noindent
One may find a detailed account of the classification of both unipoten classes and nilpotent orbits in bad characteristic in \cite{Carter2}, \S 5.11.

\medskip

To deal with the classical groups with $p=2$, we recall that the unipotent classes were determined by Wall in \cite{Wall} (the nilpotent orbits were determined by Hesselink in \cite{Hesselink}).  For convenience of the reader, here we recall the classification of unipotent classes in the classical groups following \cite{spal}, \S 2.
Suppose $G=GL(n)$ (any characteristic), $u$ a unipotent element of $G$. Then one can associate to $u$ a partition $\l=(\l_1,\l_2,\ldots)=1^{c(1)}\oplus 2^{c(2)}\oplus\cdots$ of $n$ with $\l_1\geq\l_2\geq \cdots$, where $c(i)$ is the number of Jordan blocks of $u$ of dimension $i$, for every $i\geq 1$.
In this way the set $\cu(G)$ of unipotent conjugacy classes of $G$
is parametrized by the set of partitions of $n$. We denote by $C_\l$ the unipotent class corresponding to the partition $\l$. The set $\cu(G)$ has a natural partial order: $\O_1\leq \O\iff \O_1\subseteq \ov\O$.
If we partially order the set of partitions of $n$ by $\l\leq \mu\iff\sum_{j=1}^i\l_j\leq \sum_{j=1}^i\mu_j$ for every $i\geq 1$, then the map $\l\to C_\l$ is an isomorphism of p.o.-sets.

Now assume $p=2$.
In this case there exists a homomorphism (central isogeny) of $SO(2n+1)$ onto $Sp(2n)$ which is an isomorphism of abstract groups. We shall therefore deal only with $Sp(2n)$ and $SO(2n)$. Let $\omega$ be an object distinct from $0$ and $1$, and consider the set $\{\omega,0,1\}$ totally ordered by $\omega<0<1$. Assume $G=Sp(2n)\leq GL(2n)$ (resp. $G=O(2n)\leq Sp(2n)$). The unipotent conjugacy classes of $G$ are parametrized by pairs $(\l,\e)$ such that

\noindent
\begin{itemize}
\item[a)]  $\l=1^{c(1)}\oplus2^{c(2)}\oplus\cdots$ is a partition of $2n$ with $c(i)$ even for every odd $i$.
\item[b)] $\e:\N\to\{\omega,0,1\}$ is such that
\begin{itemize}
\item[b$_1$)] $\e(i)=\omega$ if $i$ is odd or $i\geq 1$ and $c(i)=0$.
\item[b$_2$)] $\e(i)=1$ if $i$ is even  and $c(i)$ is odd ($i\not=0$).
\item[b$_3$)] $\e(i)\not=\omega$ if $i$ is even  and $c(i)\not=0$  ($i\not=0$).
\item[b$_4$)] $\e(0)=1$ (resp. $\e(0)=0$).
\end{itemize}
\end{itemize}

The correspondence is obtained as follows. Let $u$ be a unipotent element of $G$. Then $u$ determines a class in $GL(2n)$, hence the partition $\l$ of $2n$. This partition satisfies a). Now, if $i$ is even, $i\not=0$ and $c(i)\not=0$, we put $\e(i)=0$ if $f((u-1)^{i-1}(x),x)=0$ for every $x\in \ker(u-1)^i$, and $\e(i)=1$ otherwise (here $f$ is the bilinear form used to define $Sp(2n)$). In view of condition b), this defines uniquely $\e$. 

We denote by $C_{\l,\e}$ the unipotent class of $G$ corresponding to $(\l,\e)$. We observe that every unipotent class of $Sp(2n)$ intersects $O(2n)$ in a unique class of $O(2n)$. Moreover, the unipotent classes of $O(2n)$ contained in $SO(2n)$  (the connected component of $O(2n)$)  are those for which $\l_1^\ast$ is even ($\l^\ast$ is the dual partition of $\l$).  If all $\l_i$'s and $c(i)$'s are even and if $\e(i)\not=1$ for every $i$, then the unipotent class $C_{\l,\e}$ of $O(2n)$ splits into two classes of $SO(2n)$. All the other unipotent classes in $SO(2n)$ are unipotent classes in $O(2n)$. 

We shall use the notation $(\l,\e)=1^{c(1)}_{\e(1)}\oplus 2^{c(2)}_{\e(2)}\oplus\cdots$.

In \cite{spal}, \S 2.8, 2.9, 2.10, there are formulas for the dimensions of centralizers of unipotent elements in $Sp(2n)$, $O(2n)$ (hence also in $SO(2n)$), the determination of the component groups $C(u)/C(u)^\circ$ in the various cases, and an explicit definition of a partial order on pairs $(\l,\e)$ such that $C_{\l,\e}\leq C_{\mu,\phi}\iff (\l,\e)\leq (\mu,\phi)$.

We shall use the notation as in \cite{spal}. As above mentioned, for the classical groups we only have to consider $p=2$, and then groups of type $C_n$ and $D_n$. For convenience, we shall work with $Sp(2n)$ and $SO(2n)$.

Strategy of the proof. Let $G_\C$ be the corresponding group over $\C$. We have shown in \cite{CCC} that for every spherical conjugacy class $\cal C$ of $G_\C$ there exists an involution $w=w(\cal C)$ in $W$ such that $\dim {\cal C}=\ell(w)+\rk(1-w)$, with ${\cal C}\cap BwB\not=\emptyset$. For each group $G$ we introduce a certain set $\O(G)$ of unipotent conjugacy classes which are candidates for being spherical. For each $\O\in\O(G)$ we show that there is a (non-necessarily unipotent) spherical conjugacy class ${\cal C}$ in $G_\C$ such that
$$
\dim \O=\dim{\cal C}
$$
Let $w=w(\cal C)$. Our aim is to show that $\O\cap BwB\not=\emptyset$.

\begin{definition}\label{sferica} Let $\O$ be a conjugacy class of $G$. We say that $\O$ satisfies $(*)$ if there exists $w\in W$ such that $BwB\cap \O\not=\emptyset$ and $\dim \O=\ell(w)+\rk(1-w)$.
\end{definition}

Let $\O$ be a conjugacy class in $G$. There exists a unique
element $w=w(\O)
\in W$
such that $\O\cap BwB$ is open dense in $\O$. In particular 
\begin{equation}
\overline{\O}=\overline {\O\cap
Bw B}\subseteq \overline {Bw B}.
\label{inclusion}
\end{equation}

It follows that if $y$ is an element of $\overline\O$ and $y\in Bw B$, then
$w\leq z$ in the
Chevalley-Bruhat order of $W$.

We recall the following result proved in \cite{CCC}, Theorem 5 over $\C$, but which is valid with the same proof over any algebraically closed field.

\begin{theorem}\label{metodo}
Suppose that $\O$ contains an element $x\in B{w}B$. 
Then $$\dim B.x
\geq \ell(w)+\rk(1-w).$$ In
particular $\dim {\O} \geq \ell(w)+\rk(1-w)$.
If, in addition, $\dim\O\leq\ell(w)+\rk (1-w)$ then $\O$ is spherical, 
$w=w(\O)$ and $B.x$ is the dense $B$-orbit in $\O$.\cvd
\end{theorem}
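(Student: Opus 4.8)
The plan is to prove the substantive assertion — that $x\in BwB\cap\O$ forces $\dim B.x\ge\ell(w)+\rk(1-w)$ — by exhibiting an explicit $(\ell(w)+\rk(1-w))$-dimensional family of $B$-conjugates of $x$; the remaining statements will then fall out from elementary dimension and irreducibility considerations. I start from the standard normal form of the Bruhat cell: since $BwB=B\dot wU_w$ and $B=UT$, the product map
$$
U\times T\times U_w\longrightarrow BwB,\qquad (v,t,u)\longmapsto v\,\dot w\,t\,u ,
$$
is an isomorphism of varieties (recall $\dim U_w=\ell(w)$, in accordance with $\dim BwB=\ell(w)+\dim B$). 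I write our element as $x=v_0\,\dot w\,t_0\,u_0$ with $v_0\in U$, $t_0\in T$, $u_0\in U_w$.

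I then conjugate $x$ by the subgroup $TU_w\le B$. Using that $T$ normalises $U$ and $U_w$ (both are directly spanned by root subgroups), that $\dot wr\dot w^{-1}\in T$ for $r\in T$, and that $\dot wu\dot w^{-1}\in U^-$ for $u\in U_w$, a direct computation gives, for $b=su'$ with $s\in T$ and $u'\in U_w$,
$$
b\,x\,b^{-1}=\bigl(s\,u'\,v_0\,s^{-1}\bigr)\ \dot w\ \bigl(({}^{w^{-1}}\!\!s)\,s^{-1}t_0\bigr)\ \bigl(s\,u_0\,u'^{-1}\,s^{-1}\bigr),\qquad {}^{w^{-1}}\!\!s:=\dot w^{-1}s\dot w ,
$$
where the three bracketed factors lie in $U$, $T$ and $U_w$ respectively; in other words, these are precisely the three normal-form coordinates of $bxb^{-1}$ in $U\times T\times U_w$.

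The key point concerns the map $\Psi\colon U_w\times T\to T\times U_w$, $\Psi(u',s)=\bigl(({}^{w^{-1}}\!\!s)\,s^{-1}t_0,\ s\,u_0\,u'^{-1}\,s^{-1}\bigr)$, which records the last two coordinates above. Its image contains $(S_w\,t_0)\times U_w$, where $S_w=\{({}^{w^{-1}}\!\!s)\,s^{-1}\mid s\in T\}$ is the image of the torus endomorphism of $T$ inducing $w^{-1}-1$ on the cocharacter lattice, hence a subtorus of dimension $\rk(1-w^{-1})=\rk(1-w)$. Indeed, given $(z,y)\in(S_w\,t_0)\times U_w$, one first chooses $s\in T$ with $({}^{w^{-1}}\!\!s)\,s^{-1}=z\,t_0^{-1}$ and then puts $u'=s^{-1}y^{-1}s\,u_0\in U_w$. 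Since $\Psi$ is the composite
$$
U_w\times T\ \longrightarrow\ B.x\ \hookrightarrow\ BwB\ \xrightarrow{\ \sim\ }\ U\times T\times U_w\ \longrightarrow\ T\times U_w
$$
(the first arrow being $(u',s)\mapsto bxb^{-1}$ and the last the projection), it factors through $B.x$; therefore $\dim B.x\ge\dim\Psi(U_w\times T)\ge\ell(w)+\rk(1-w)$, and since $B.x\subseteq\O$ we get $\dim\O\ge\ell(w)+\rk(1-w)$.

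Finally, if in addition $\dim\O\le\ell(w)+\rk(1-w)$ then all these inequalities are equalities, so $B.x$ is a locally closed, irreducible $B$-orbit of dimension $\dim\O$ inside the irreducible variety $\O$; hence $\overline{B.x}=\O$, so $\O$ is spherical and $B.x$ is its (unique) dense $B$-orbit. Moreover $\O\cap BwB$ is locally closed in $\O$ and contains the dense subset $B.x$, hence is open dense in $\O$, and the uniqueness defining $w(\O)$ forces $w=w(\O)$. The only genuine computation is the conjugation identity of the second paragraph — pushing the torus factor coming from $s\dot ws^{-1}$ through $\dot w$ into the $T$-slot and checking that the remaining factors stay in the $U$- and $U_w$-slots; everything else is formal, given the standard facts $BwB\cong U\times T\times U_w$ and $\rk(1-w^{-1})=\rk(1-w)$.
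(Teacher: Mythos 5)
Your argument is correct and is in substance the paper's own proof (the one from \cite{CCC}, Theorem 5, reproduced here in the more general Theorem \ref{metodoesteso}): both conjugate $x$ by the subgroup $TU_w$ and exploit the uniqueness of the normal form in the cell $BwB$, with the commutation of the torus factor past $\dot w$ as the only real computation. The sole difference is packaging --- the paper bounds $\dim C_{TU_w}(x)$ from above by showing it is contained in $T^{w}$, whereas you bound the orbit dimension from below by exhibiting the subset $(S_w t_0)\times U_w$ in the image of the projection onto the $T\times U_w$ coordinates; these are the same computation read in opposite directions.
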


Let $\O$ be a conjugacy class of $G$ and let  $w=w(\O)$. If $\O^{-1}=\O$ (i.e. if any element $x\in\O$ is conjugate to its inverse), then $w^2=1$. It is well known that over any algebraically closed field any unipotent element is conjugate to its inverse ( \cite{Cuno}, Lemma 1.16, \cite{Cdue}, Lemma 2.3. See also \cite{Lus}, Proposition 2.5 (a)), so that $w$ is an involution for every non-trivial unipotent conjugacy class $\O$. However, it has recently been shown in \cite{intersezioni}, that $w^2=1$  for every conjugacy class $\O$ in $G$.

If $g$ is in $Z(G)$, then $g\in T$, $\O_g=\{g\}$, $w(\O)=1$. In the remaining of the paper we shall consider only non-central conjugacy classes.

We shall use the following result

\begin{lemma}\label{scambio} Assume the positive roots $\b_1,\ldots,\b_\ell$ are such that $[X_{\pm \b_i},X_{\pm \b_j}]=1$ for every $1\leq i<j\leq \ell$. 
Then, for $\xi_1,\ldots,\xi_\ell\in \K^\ast$,  $g=x_{\b_1}(-\xi_1^{-1})\cdots x_{\b_\ell}(-\xi_\ell^{-1})$, $h=h_{\b_1}(-\xi_1)\cdots h_{\b_\ell}(-\xi_\ell)$ we have
$$
gx_{-\b_1}(\xi_1)\cdots x_{-\b_\ell}(\xi_\ell)g^{-1}=
n_{\b_1}\cdots n_{\b_\ell}hx_{\b_1}(2\xi_1^{-1})\cdots x_{\b_\ell}(2\xi_\ell^{-1})
$$
\end{lemma}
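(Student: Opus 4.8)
The plan is to reduce everything to the rank-one relation $x_{\b}(-\xi^{-1})x_{-\b}(\xi)x_{\b}(\xi^{-1}) = h_{\b}(\xi^{-1})n_{\b}$, which is just (\ref{relazioni}) with $\xi$ replaced by $\xi^{-1}$, rewritten as a conjugation statement. First I would treat the case $\ell=1$: conjugating $x_{-\b}(\xi)$ by $g=x_{\b}(-\xi^{-1})$ and using (\ref{relazioni}) together with $n_{\b}^{-1}x_{\b}(t)n_{\b}=x_{-\b}(-t)$ (the action of the Weyl-group representative on root subgroups, from \cite{Carter1}) gives $gx_{-\b}(\xi)g^{-1}=n_{\b}h_{\b}(-\xi)x_{\b}(2\xi^{-1})$ after collecting terms; one checks the scalar bookkeeping $h_{\b}(\xi^{-1})=h_{\b}(\xi)^{-1}$ and the commutation of $h_{\b}(-\xi)$ past $x_{\b}$ introduces a $\b$-weight factor that is absorbed into the already-present $x_{\b}(2\xi^{-1})$.

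Next I would leverage the hypothesis $[X_{\pm\b_i},X_{\pm\b_j}]=1$ for $i<j$. This means all the rank-one subgroups $\langle X_{\b_i},X_{-\b_i}\rangle$ pairwise commute, so the conjugation $g\,(\prod_i x_{-\b_i}(\xi_i))\,g^{-1}$ factors as the product over $i$ of the individual conjugations $x_{\b_i}(-\xi_i^{-1})x_{-\b_i}(\xi_i)x_{\b_i}(\xi_i^{-1})$ computed in step one. Each factor equals $n_{\b_i}h_{\b_i}(-\xi_i)x_{\b_i}(2\xi_i^{-1})$, and since the $i$-th SL$_2$ (or PGL$_2$) commutes with the $j$-th for $j\neq i$, the $n_{\b_i}$'s, $h_{\b_i}$'s and $x_{\b_i}$'s from different indices all commute with each other; hence the product can be reordered into $n_{\b_1}\cdots n_{\b_\ell}\,h_{\b_1}(-\xi_1)\cdots h_{\b_\ell}(-\xi_\ell)\,x_{\b_1}(2\xi_1^{-1})\cdots x_{\b_\ell}(2\xi_\ell^{-1})$, which is exactly the claimed right-hand side $n_{\b_1}\cdots n_{\b_\ell}hx_{\b_1}(2\xi_1^{-1})\cdots x_{\b_\ell}(2\xi_\ell^{-1})$.

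The one genuinely delicate point is that $[X_{\b_i},X_{-\b_j}]=1$ for $i\neq j$ is \emph{not} automatic from orthogonality of roots in general (it can fail when $\b_i-\b_j$ or $\b_i+\b_j$ is a root), but it is precisely what the hypothesis grants, so I would make sure to invoke it at every reordering and not silently assume it from disjointness of supports. A secondary bookkeeping nuisance is keeping the torus elements $h_{\b_i}(-\xi_i)$ in the middle: one must verify they commute among themselves (they lie in $T$, so this is immediate) and that sliding them leftward past the $n_{\b_j}$'s for $j<i$ — if one prefers to collect all $n$'s first — only permutes the $h_\b$ arguments by the Weyl action, which here does nothing harmful because distinct $\b_j$'s commute. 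I expect this reordering accounting, rather than any conceptual issue, to be the main obstacle, and it is routine once the rank-one case is nailed down.
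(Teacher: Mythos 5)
Your proposal is correct and follows essentially the same route as the paper: the paper's proof consists precisely of the rank-one identity $x_\a(-\xi^{-1})x_{-\a}(\xi)x_\a(\xi^{-1})=n_\a h_\a(-\xi)x_\a(2\xi^{-1})$ derived from (\ref{relazioni}), followed by the observation that the hypothesis $[X_{\pm\b_i},X_{\pm\b_j}]=1$ lets the $\ell$ rank-one computations be performed independently and the resulting factors reordered. Your additional care about not deducing $[X_{\b_i},X_{-\b_j}]=1$ from orthogonality alone is exactly the right instinct, and is why the paper states that commutation as an explicit hypothesis.
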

\pf By (\ref{relazioni}) we have
$
x_\a(-\xi^{-1})x_{-\a}(\xi)x_\a(\xi^{-1})=n_\a h_\a(-\xi)x_\a(2\xi^{-1})
$. 
The result follows from $[X_{\pm \b_i},X_{\pm \b_j}]=1$ for every $1\leq i<j\leq \ell$. \cvd

The hypothesis of the Lemma are satisfied for instance if $\b_1,\ldots,\b_\ell$ are pairwise orthogonal and long, as in \cite{Cos}, Lemma 4.1. In characteristic 2, we have
$[X_{\gamma},X_{\delta}]=1$ for every pair $(\gamma,\delta)$ of orthogonal roots, so that for any set of pairwise orthogonal roots $\b_1,\ldots,\b_\ell$ and for $g=x_{\b_1}(1)\cdots x_{\b_\ell}(1)$ we get
\begin{equation}
gx_{-\b_1}(1)\cdots x_{-\b_\ell}(1)g^{-1}=
n_{\b_1}\cdots n_{\b_\ell}\quad \text{for $p=2$}.
\label{p=2}
\end{equation}
 
\subsection{Classical groups in characteristic 2.}

As mentioned above, for the classical groups we have deal only with $Sp(2n)$ and $SO(2n)$. However, for completeness, we shall also deal with the case when $G$ has type $A_n$, since this is dealt with in \cite{FR}, but not in \cite{Giovanna-good}.

\subsubsection{Type $A_{n}$, $n\geq 1$.}

We show that every spherical unipotent conjugacy class satisfies $(*)$. The  spherical nilpotent orbits (and therefore the spherical unipotent classes) have been classified in \cite{FR}, and it follows that a unipotent conjugacy class $\O$ is spherical if and only if $\O=X_i$ the unipotent class $2^i\oplus 1^{n+1-2i}$ for $i=1,\ldots,m=\left[\frac{n+1}{2}\right]$. For every $i=1,\ldots,m$, let $\b_i=e_{i}-e_{n+2-i}$: then, as for $\C$ (and for any algebraically closed field of odd or zero characteristic)
the element $u=x_{-\b_1}(1)\cdots x_{-\b_i}(1)$ lies in $X_i\cap BwB$, where $w=s_{\b_1}\cdots s_{\b_i}$ is such that $\dim X_i=\ell(w)+\rk(1-w)$. In fact one may take $n_{\b_1}\cdots n_{\b_i}\in s_{\b_1}\cdots s_{\b_i}B\cap X_i$.

\begin{center}
\vskip-20pt
$$
\begin{array}{|c||c|c|}
\hline
\O  &w(\O)&x\in \O\cap Bw(\O)B \\ 
\hline
\hline
\begin{array}{c}
X_\ell=2^\ell\oplus 1^{n+1-2\ell}\\
\ell=1,\ldots,m=\left[\frac{n+1}{2}\right]
\end{array}
&\quad \displaystyle s_{\b_1}\cdots s_{\b_\ell}\quad&\quad\displaystyle n_{\b_1}\cdots n_{\b_\ell}\quad \\
\hline
\end{array}
$$
\end{center}
\begin{center} Table \totable:  Spherical unipotent classes in $A_n$ ($p=2$).
\end{center}

In particular, a non-trivial unipotent class $\O$ is spherical if and only if it consists of involutions, if and only if $\O$ has a representative of the form $x_{\gamma_1}(1)\cdots x_{\gamma_t}(1)$, where $\gamma_1,\ldots,\gamma_t$ are pairwise orthogonal simple roots.

\medskip

\subsubsection{Type $C_n$ (and $B_n$), $n\geq 2$.}

We first show that if $u$ is an involution of $G$, then $\O_u$ is spherical, by showing that $\O_u$ satisfies $(*)$. So let $u$ be an involution of $G=Sp(2n)$. Then the partition corresponding to $u$ is of the form $\l=1^{c_1}\oplus 2^{c_2}$, with $c_2=\ell$, $c_1=2n-2\ell$,

Using the above recalled description of unipotent conjugacy classes,
let $\l=2^{i}\oplus 1^{2n-2i}$, for $i=1,\ldots,n$. Then we have $\e_0=1$, $\e_1=\omega$, $\e_i=0$ for $i\geq 3$. As for $\e_2$, we have $\e_2=1$ if $i$ is odd. On the other hand, if $i$ is even, we have both possibilities $\e_2=0$ or $1$. We denote by $X_i$ the class corresponding to $\e_2=1$, and by $Y_i$ the class corresponding to $\e_2=0$ (when $i$ is even)

 We denote by $X_{\ell,\C}$ the unipotent class in $Sp(2n,\C)$ corresponding to $\l=2^{\ell}\oplus 1^{2n-2\ell}$. Then we get
$$
\begin{array}{llllll}
\dim X_\ell&=&\dim X_{\ell,\C}&=&\ell(2n-\ell+1)&\ell\in\{1,\ldots,n\} \\
\dim Y_\ell&= &\dim X_{\ell, \C}-\ell&=&2n\ell-\ell^2=\ell(2n-\ell)&\ell\in\{1,\ldots,n\},\ \text{$\ell$ even}
\end{array}
$$

Note that if $\ell$ is even and we write $\ell=2\ell'$, then $\dim Y_\ell=\dim \O_{\s_{\ell'},\C}$, where $\O_{\s_{\ell'},\C}$ is the conjugacy class in $Sp(2n,\C)$ of the involution $\s_{\ell'}$ (\cite{CCC}, Table 1). In $Sp(2n,\C)$, the spherical semisimple conjugacy class $\O_{\s_{\ell'}}$ lies over $w=s_{\g_1}\cdots s_{\g_{\ell'}}$ (\cite{CCC}, Table 5, \cite{Cos}, \S 4.2.2). 

We observe that if the partition associated to the involution $u$ is $\l=2^{2\ell}\oplus 1^{2n-4\ell}$, then $\O_u=2^{2\ell}_0\oplus 1^{2n-4\ell}$ if and only if 
 $f((u-1)v,v)=0$ for every $v\in V$ (here $f$ is the bilinear form on $V$ used to define $Sp_{2n}(k)$). Let $w$ be an involution of $W$, $L(w)=\{\beta\in \Phi^+\mid w(\beta)=-\beta, \beta\ \text{long}\}$, $L(w)_\perp=\{\gamma\in \Phi^+\mid w(\gamma)=-\gamma, (\gamma,L(w))=0, \gamma\ \text{short}\}$. Then $w=\displaystyle\prod_{\beta\in L(w)} s_\beta\prod_{\gamma\in L(w)_\perp} s_\gamma$. Let $x=\displaystyle\prod_{\beta\in L(w)} n_\beta\prod_{\gamma\in L(w)_\perp} n_\gamma$. Then $x$ is an involution in $BwB$ and the number of blocks of length 2 in the Jordan canonical form of $x$ is $\o{L(w)}+2\o{L(w)_\perp}$. If this number is even, then $f((x-1)v,v)=0$ for every $v\in V$ if and only if $L(w)=\emptyset$.

 We put
$\b_i=2e_{i}$ for $i=1,\ldots,n$
 and $\g_i=e_{2i-1}+e_{2i}$ for $\ell=1,\ldots, p=[\frac n2]$.

Then it is straightforward to show that 
$$
x_{-\b_1}(1)\cdots x_{-\b_\ell}(1)\in X_\ell\cap Bs_{\b_1}\cdots s_{\b_\ell}B\cap U^-\quad\text{for $\ell=1,\ldots,n$}
$$
$$
x_{-\g_1}(1)\cdots x_{-\g_\ell}(1)\in Y_{2\ell}\cap Bs_{\g_1}\cdots s_{\g_\ell}B\cap U^-\quad\text{for $\ell=1,\ldots,p$}
$$
By (\ref{p=2}), we can choose
$$
n_{\b_1}\cdots n_{\b_\ell}\in X_\ell\cap wB
\quad\text{for $\ell=1,\ldots,n$.}
$$
$$
n_{\g_1}\cdots n_{\g_\ell}\in Y_{2\ell}\cap wB
\quad
\text{for $\ell=1,\ldots,p$.}
$$

One can easily deduce which classes of involutions have a representative of the form $u=\prod_{i\in K}x_{\a_i}(1)$ for a certain subset $K$ of $\Pi$. Note that since $u$ is an involution, then $(\a_i,\a_j)=0$ if $i$, $j\in K$ with $i\not=j$. Up to the $W$ action, we have only the following subsets $K$, and the corresponding classes:
$$
\prod_{i=1}^{\ell}x_{\a_{n-2(i-1)}}(1)\in X_{2\ell-1}\quad\text{for $\ell=1,\ldots,\left[\frac{n+1}2\right]$}
$$
$$
\prod_{i=1}^{\ell}x_{\a_{2i-1}}(1)\in Y_{2\ell}\quad\text{for $\ell=1,\ldots,p=\left[\frac{n}2\right]$}
$$
These exhaust the conjugacy classes of involutions with representative of the form $\prod_{i\in K}x_{\a_i}(1)$. In particular all $X_{2i}$ have no representative of the form $\prod_{i\in K}x_{\a_i}(1)$.
The point is that in good characteristic, for $\ell=1,\ldots,p=\left[\frac{n}2\right]$ the element
$\prod_{i=1}^{\ell}x_{\a_{2i-1}}(1)$ is conjugate to 
$\prod_{i=1}^{2\ell}x_{\b_{i}}(1)$ (which lies in $X_{2\ell}$).

\medskip

If $J_i=\{i+1,\ldots,n\}$ ($J_n=\vuoto$) for $i=1,\ldots, n$, $K_\ell=\{1,3,\ldots,2\ell-1,2\ell+1,2\ell+2,\ldots,n\}$
for $\ell=1,\ldots, p$, we obtained

\begin{center}
\vskip-20pt
$$
\begin{array}{|c||c|c|c|c|}
\hline
\O  &J& w(\O) & x\in \O\cap  Bw(\O)B&\dim \O \\ 
\hline
\hline 
\begin{array}{c}
X_\ell=2^{\ell}\oplus 1^{2n-2\ell}
\\
\ell=1,\ldots,n
\end{array} &\displaystyle J_\ell & s_{\b_1}\cdots s_{\b_\ell} &n_{\b_1}\cdots n_{\b_\ell}&\ell(2n-\ell+1)\\
\hline
\begin{array}{c}
Y_{2\ell}=2^{2\ell}_0\oplus 1^{2n-4\ell}
\\
\ell=1,\ldots,p=[\frac n2]
\end{array}
&  \displaystyle K_{\ell}& \quad\displaystyle s_{\g_1}\cdots s_{\g_\ell}\quad& n_{\g_1}\cdots n_{\g_\ell}&4\ell(n-\ell)
\\
\hline
\end{array}
$$
\end{center}
\begin{center} Table \totable:  Involutions in $C_n$,
$n\geq 2$, $p=2$.
\end{center}

where $w(\O)=w_0\wJ$. By Theorem \ref{metodo}, we have proved

\begin{proposition}\label{sfericheC_n} Let $\O$ be the conjugacy class of an involution of $Sp(2n)$ in characteristic 2. Then $\O$ is spherical.\cvd
\end{proposition}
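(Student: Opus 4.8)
The plan is to apply Theorem~\ref{metodo} to each conjugacy class of an involution of $G = Sp(2n)$ in characteristic $2$, using the explicit data already assembled above. By the classification of unipotent classes in $Sp(2n)$ recalled earlier, the class $\O$ of an involution $u$ corresponds to a partition of the form $\l = 2^\ell \oplus 1^{2n-2\ell}$, and when $\ell$ is even there are the two possibilities $X_\ell$ (with $\e_2 = 1$) and $Y_\ell$ (with $\e_2 = 0$); when $\ell$ is odd only $X_\ell = X_{2\ell'-1}$ occurs. So it suffices to treat the two families $X_\ell$ for $\ell = 1,\ldots,n$ and $Y_{2\ell}$ for $\ell = 1,\ldots,p = [\frac{n}{2}]$.

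First I would fix, for the family $X_\ell$, the orthogonal long roots $\b_i = 2e_i$ and the element $w = s_{\b_1}\cdots s_{\b_\ell}$, and for the family $Y_{2\ell}$ the orthogonal short roots $\g_i = e_{2i-1}+e_{2i}$ and $w = s_{\g_1}\cdots s_{\g_\ell}$; in both cases $w$ is of the form $w_0 w_J$ for the stated parabolic subset $J$, hence its length and the rank of $1-w$ are read off directly, giving $\ell(w)+\rk(1-w) = \ell(2n-\ell+1)$ in the first case and $4\ell(n-\ell)$ in the second. Next I would verify the membership statements $x_{-\b_1}(1)\cdots x_{-\b_\ell}(1) \in X_\ell \cap Bw B \cap U^-$ and $x_{-\g_1}(1)\cdots x_{-\g_\ell}(1) \in Y_{2\ell} \cap Bw B \cap U^-$: the Bruhat-cell membership is the standard fact that a product of root-group elements $x_{-\b}(1)$ for pairwise orthogonal $\b$ lies in $B(\prod s_\b)B$ (one can even exhibit $\prod n_{\b_i} \in w B$ inside the class by~(\ref{p=2})), while the identification of the Jordan type and the value of $\e_2$ is a direct matrix computation with the defining symplectic form $f$: for the $\b_i$ one checks $\l = 2^\ell \oplus 1^{2n-2\ell}$ with $f((u-1)v,v)$ not identically zero, and for the $\g_i$ one checks $\l = 2^{2\ell}\oplus 1^{2n-4\ell}$ with $f((u-1)v,v) \equiv 0$, matching the criterion recalled above (equivalently, $L(w) = \emptyset$ for the $\g$-product since the $\g_i$ are short).

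Having established $x \in \O \cap BwB$ with $\O$ and $w$ as above, I would invoke the dimension formulas already displayed: $\dim X_\ell = \dim X_{\ell,\C} = \ell(2n-\ell+1)$ and $\dim Y_{2\ell} = \dim X_{2\ell,\C} - 2\ell = 4\ell(n-\ell)$, the latter matching $\dim \O_{\s_\ell,\C}$ in $Sp(2n,\C)$. In each case $\dim \O = \ell(w)+\rk(1-w)$, so the inequality $\dim\O \le \ell(w)+\rk(1-w)$ of Theorem~\ref{metodo} holds (with equality), and the theorem yields that $\O$ is spherical (and moreover $w = w(\O)$ and $B.x$ is the dense orbit). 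Since every involution class is one of the $X_\ell$ or $Y_{2\ell}$, this proves the proposition.

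The main obstacle is the verification of the two local claims about the representatives $\prod x_{-\b_i}(1)$ and $\prod x_{-\g_i}(1)$: namely that they lie in the asserted Bruhat cell and, more delicately, that they represent the correct unipotent class — in particular that the value of the invariant $\e_2$ comes out as $1$ for the $X$-family and $0$ for the $Y$-family. This is exactly where the distinction between long and short roots enters (a product of $n_\b$ over long roots contributes $L(w) \ne \emptyset$, forcing $\e_2 = 1$, whereas short roots give $L(w) = \emptyset$ and hence $\e_2 = 0$), and it requires the bilinear-form computation spelled out in the paragraph preceding the proposition; the dimension bookkeeping and the application of Theorem~\ref{metodo} are then routine.
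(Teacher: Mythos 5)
Your proposal is correct and follows essentially the same route as the paper: the same representatives $\prod x_{-\b_i}(1)$ (long roots $\b_i=2e_i$) for $X_\ell$ and $\prod x_{-\g_i}(1)$ (short roots $\g_i=e_{2i-1}+e_{2i}$) for $Y_{2\ell}$, the same identification of the class via the Jordan type and the invariant $\e_2$ through the criterion $L(w)=\emptyset$, the same dimension bookkeeping against $\ell(w)+\rk(1-w)$, and the same final appeal to Theorem~\ref{metodo}. Nothing essential is missing.
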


Our aim is to show that a (non-trivial) unipotent conjugacy class $\O_u$ is spherical if and only if $u$ is an involution. By \cite{FR}, Remark 2.14, the orbit $\O$ is spherical if and only if $G/H$ is spherical, where $H$ is the isotropy subgroup of an element in $\O$. Moreover $G/H$ is spherical if and only if $G/H^\circ$ is spherical, where $H^\circ$ denotes the connected component of $H$. We shall therefore use the following 

\begin{lemma}\label{generale} Let $\O$ be a $G$-orbit with isotropy subgroup $H$. Then $\O$ is spherical if and only if $G/H^\circ$ is spherical.\cvd
\end{lemma}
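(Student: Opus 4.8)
The plan is to deduce Lemma \ref{generale} from standard facts about spherical homogeneous spaces, exactly the two facts already cited in the text just before the statement: that $\O$ is spherical if and only if $G/H$ is spherical (where $H=G_x$ is the isotropy subgroup of a point $x\in\O$), and that $G/H$ is spherical if and only if $G/H^\circ$ is spherical. So the content of the lemma is really the chain $\O \text{ spherical} \iff G/H \text{ spherical} \iff G/H^\circ \text{ spherical}$, and the work is to justify the second equivalence.

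First I would observe that $\O\cong G/H$ as $G$-varieties via the orbit map, so ``$\O$ is spherical'' means precisely that a Borel subgroup $B$ of $G$ has a dense orbit on $G/H$; this reduces the lemma to the claim that $G/H^\circ$ is spherical if and only if $G/H$ is spherical. For one direction, consider the natural $G$-equivariant projection $\pi\colon G/H^\circ \to G/H$. Since $H/H^\circ$ is finite, $\pi$ is a finite surjective morphism, so $\dim G/H^\circ = \dim G/H$ and $\pi$ maps any dense $B$-orbit in $G/H^\circ$ onto a dense $B$-orbit in $G/H$; hence if $G/H^\circ$ is spherical then so is $G/H$. For the converse, suppose $B$ has a dense orbit $\O'$ in $G/H$. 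Then $\pi^{-1}(\O')$ is a dense open $B$-stable subset of $G/H^\circ$ which is a union of finitely many $B$-orbits (each $B$-orbit in $G/H^\circ$ maps onto a $B$-orbit in $G/H$, and the fibres of $\pi$ over $\O'$ are finite of constant size, so $\pi^{-1}(\O')$ breaks into finitely many $B$-orbits by irreducibility of $B$); since a finite union of constructible sets of dimension $<\dim G/H^\circ$ cannot be dense, one of these $B$-orbits must be dense, so $G/H^\circ$ is spherical. Alternatively, and more in the spirit of the references, I would simply cite the characterization that a homogeneous space $G/K$ is spherical if and only if $k(G/K)^B = k$ (the field of $B$-invariant rational functions is trivial), together with the fact that $k(G/H^\circ)$ is a finite extension of $k(G/H)$, and invoke that a finite extension is trivial over $k$ iff the base is — but in positive characteristic one must be slightly careful with inseparability, so the orbit-counting dimension argument above is cleaner and self-contained.

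The only mild obstacle is making the converse direction airtight in arbitrary characteristic: one must know that $\pi$ restricted over the dense $B$-orbit $\O'\subseteq G/H$ is a finite morphism with fibres of constant cardinality so that the preimage really is a finite union of $B$-orbits of full dimension, and that $B$-orbits are locally closed (hence constructible), so that a proper finite union of lower-dimensional ones is not dense. Both of these are standard: $G/H^\circ\to G/H$ is an fppf $H/H^\circ$-torsor, hence finite étale in good cases and in any case finite flat of degree $|H/H^\circ|$, and orbits of an algebraic group acting on a variety are always locally closed. Given these, the argument is purely dimension-theoretic and closes the lemma; since the lemma is stated with \cvd and no proof is expected to be long, I would present it in this compact form, leaning on \cite{FR}, Remark 2.14 for the first equivalence as the text already does.
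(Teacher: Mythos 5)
Your argument is correct and follows the same route as the paper: the paper gives no proof of this lemma at all, merely citing \cite{FR}, Remark 2.14 for the equivalence ``$\O$ spherical $\iff$ $G/H$ spherical'' and asserting the equivalence ``$G/H$ spherical $\iff$ $G/H^\circ$ spherical'' as standard in the preceding paragraph. Your finite-covering, orbit-counting argument for the second equivalence is a valid and self-contained justification of exactly the fact the paper takes for granted, and your caution about the orbit map $G/H\to\O$ in positive characteristic (deferring to \cite{FR} rather than claiming an isomorphism) is appropriate.
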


By Proposition \ref{sfericheC_n}, we are left to show that if the (non-trivial) unipotent class $\O$ does not consist of involutions, then $\O$ is not spherical. Let $u$ be a unipotent element of order greater than 4, and let $v$ be an element of order 4 in the subgroup generated by $u$. Since $C(u)\leq C(v)$, if $\O_v$ is non-spherical, then also $\O_u$ is non-spherical. We are therefore left to consider the set $X$ of conjugacy classes of elements of order 4. By \cite{Knop}, Theorem 2.2, it is enough to show that the minimal elements in $X$ are not spherical. 

From the explicit definition of a partial order on pairs $(\l,\e)$ such that $C_{\l,\e}\leq C_{\mu,\phi}\iff (\l,\e)\leq (\mu,\phi)$ given in
 \cite{spal}, \S 2.10, it follows that the minimal elements in $X$ are the classes $3^2\oplus 1^{2n-6}$ (if $n\geq 3$) and $4\oplus 1^{2n-4}$.
 
 In the following lemma we deal with these cases. We also consider a case in $D_n$.

\medskip
\begin{lemma}\label{nonsfericheC_n} Let $\O$ be the unipotent conjugacy class of type $3^2\oplus 1^{2n-6}$ in $C_n$ or $D_n$. Then $\O$ is not spherical ($\ch k=2$).
\end{lemma}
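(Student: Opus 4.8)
The plan is to show that the class $\O$ of type $3^2\oplus 1^{2n-6}$ fails the numerical criterion of Theorem \ref{metodo} combined with the fact, recalled above, that $w(\O)$ must be an involution for a spherical class; equivalently, I will show that every involution $w\in W$ for which $\O\cap BwB\neq\emptyset$ satisfies $\ell(w)+\rk(1-w)<\dim\O$, so $(*)$ cannot hold, and hence by Theorem \ref{metodo} (in the contrapositive direction) $\O$ is not spherical. Concretely, first I would compute $\dim\O$ using the centralizer-dimension formulas of \cite{spal}, \S 2.8--2.9 for the pair $(\l,\e)$ with $\l=3^2\oplus 1^{2n-6}$: for type $C_n$ the relevant $\e$ is forced by the parity conditions b$_1$)--b$_4$) (note $c(3)=2$ is even as required, $\e(3)=\omega$, and $\e(2)=\omega$ since $c(2)=0$), so the class is unique, and similarly in type $D_n$ one checks whether it splits. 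This yields an explicit integer $\dim\O$ as a function of $n$.

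Next I would bound $\ell(w)+\rk(1-w)$ over involutions $w$ with $\O\cap BwB\neq\emptyset$. The key input is the inclusion (\ref{inclusion}): if $\O\cap BwB\neq\emptyset$ then $w(\O)\leq w$ in the Bruhat order, but more usefully I would use the reverse-direction information coming from $\overline\O$. Since $\O$ has a unipotent element whose Jordan type has a block of size $3$, its closure contains all classes of type $\leq 3^2\oplus 1^{2n-6}$, in particular the regular-in-a-subsystem involutions realized by products of $n_\beta$'s; the element $y=n_{\b_1}n_{\b_2}n_{\b_3}$ (with $\b_i$ as in the preceding Table, suitably relabelled) lies in $\overline\O$ and in $Bw'B$ for $w'=s_{\b_1}s_{\b_2}s_{\b_3}$, which forces $w(\O)\leq w'$ and pins down $w(\O)$ rather tightly. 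Then one estimates $\ell(w(\O))+\rk(1-w(\O))$ directly from the combinatorics of that (short) element and checks it is strictly smaller than the $\dim\O$ computed in the previous step.

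The main obstacle I anticipate is controlling \emph{all} involutions $w$ with $\O\cap BwB\neq\emptyset$, not just the minimal one: a priori $\O$ could meet a larger Bruhat cell $BwB$ with $\ell(w)+\rk(1-w)$ large. To handle this I would invoke the monotonicity built into Theorem \ref{metodo} (any $x\in\O\cap BwB$ gives $\dim B.x\geq\ell(w)+\rk(1-w)$, but $\dim B.x\leq\dim\O$ always), so it suffices to show that the \emph{unique} dense element $w(\O)$ already violates the equality $\dim\O=\ell(w(\O))+\rk(1-w(\O))$, i.e. that $\dim\O>\ell(w(\O))+\rk(1-w(\O))$. Thus the crux reduces to: (i) identify $w(\O)$ exactly — I expect it to be the involution associated with a product of three mutually orthogonal roots, so $\rk(1-w(\O))=3$ and $\ell(w(\O))$ is its (small, explicitly computable) length; and (ii) verify $\dim\O$, which grows roughly like $6n-c$, exceeds that bound for all $n\geq 3$ in type $C_n$, and do the analogous count in type $D_n$ (where the short roots are unavailable, so $w(\O)$ and $\ell(w(\O))$ differ and the splitting of the class must be tracked, but the same strict inequality should persist). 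The rest is the bookkeeping of plugging the \cite{spal} formulas into these inequalities.
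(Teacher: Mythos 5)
Your proposal has a genuine logical gap at its core. Theorem \ref{metodo} is a one-way statement: it says that \emph{if} $\O\cap BwB\neq\emptyset$ and $\dim\O\leq\ell(w)+\rk(1-w)$, \emph{then} $\O$ is spherical. Its contrapositive tells you nothing when the inequality $\dim\O>\ell(w)+\rk(1-w)$ is strict. What you actually need is the converse implication --- every spherical class satisfies $\dim\O=\ell(w(\O))+\rk(1-w(\O))$ --- and that is precisely the hard direction of the characterization from \cite{CCC} and \cite{Giovanna-good}, which the paper explicitly states is available only in zero or good odd characteristic; it is not at your disposal here ($p=2$), and proving it in bad characteristic would essentially presuppose the classification being established. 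A secondary problem: your Bruhat-order step is reversed. By (\ref{inclusion}), an element $y\in\overline{\O}\cap Bw'B$ forces $w'\leq w(\O)$, i.e.\ it gives a \emph{lower} bound on $w(\O)$, so placing $n_{\b_1}n_{\b_2}n_{\b_3}$ in $\overline\O$ cannot ``pin down'' $w(\O)$ from above, and the only upper bound you have, $\ell(w(\O))+\rk(1-w(\O))\leq\dim\O$, holds for every class and yields nothing.

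The paper's proof is entirely different and does not go through the Bruhat decomposition at all. It takes the explicit representative $u=x_{\a_2}(1)x_{\gamma-\a_2}(1)$ (with $\gamma$ the highest short root), computes $C(u)^\circ=CR$ inside the maximal parabolic $P_{I\setminus\{\a_2\}}$, and observes that $C(u)^\circ$ fixes the vector $e_{\a_2}+e_{\gamma-\a_2}\in\mathfrak g$. Since the class does not consist of involutions, $u^2=x_\gamma(1)\neq 1$ and $C(u)\leq C(x_\gamma(1))$, which fixes $e_\gamma$. Two linearly independent $C(u)^\circ$-invariants in $\mathfrak g$ are incompatible with $Sp(2n)/C(u)^\circ$ being spherical (multiplicity-freeness), and the same two vectors lie in the Lie algebra of $SO(2n)$, settling the $D_n$ case. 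If you want to repair your argument, you should abandon the dimension-formula criterion and look for an invariant-theoretic or centralizer-based obstruction of this kind.
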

\pf
Let $u$ be an element in $\O=3^2\oplus 1^{2n-6}$ (this exists in $C_n$ if $n\geq 3$). In $Sp(2n)$, we may take $u=x_{\a_2}(1)x_{\gamma-\a_2}(1)$, where $\gamma$ is the highest short root ($\gamma=e_1+e_2$), and get $C(u)^\circ\leq P$ where $P$ is the maximal parabolic subgroup $P_{I\setminus\{\a_2\}}$. Then $C(u)^\circ=CR$,  $C=H_{\a_1}\times K$, where $K$ is the $C_{n-3}$-subgroup of $G$ corresponding to $\{\a_4,\ldots,\a_n\}$, and $R$ is the subgroup\
$$
\{u\in R_u P\mid u=\prod_{\a\in\Phi^+} x_{\a}(z_\a)\mid z_{a_2}=z_{\gamma-\a_2}\}
$$
of codimension 1 in $R_u P$. It follows that $C(u)^\circ$ fixes the element $e_{\a_2}+e_{\gamma-\a_2}$ of $\mathfrak g$. However, we clearly have $C(u)\leq C(u^2)$, and $u^2=x_{\gamma}(1)$. But $C(x_{\gamma}(1))$ fixes the element $e_{\gamma}$ of $\mathfrak g$, since $x_{\gamma}(1)=1+e_{\gamma}$ in $M_{2n}(k)$. It follows that $C(u)^\circ$ has 2 linearly independent invariants in $\mathfrak g$, so that $Sp(2n)/C(u)^\circ$ is not spherical.

Since $u$ lies in $SO(2n)$, and both $e_{\a_2}+e_{\gamma-\a_2}$ and $e_{\gamma}$ are in the Lie algebra of $SO(2n)$, the $SO(2n)$-orbit of $u$ is not spherical as well.
\cvd

\begin{lemma}\label{nonsferiche4} Let $\O$ be the unipotent conjugacy class of type $4\oplus 1^{2n-4}$ in $C_n$. Then $\O$ is not spherical ($\ch k=2$).
\end{lemma}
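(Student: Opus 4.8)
The plan is to mimic the proof of Lemma \ref{nonsfericheC_n}: exhibit a representative $u$ of the class $4\oplus 1^{2n-4}$, identify the connected centralizer $C(u)^\circ$ explicitly, and then show that $C(u)^\circ$ has at least two linearly independent invariant vectors in $\mathfrak g$ (equivalently, in the natural module $V=k^{2n}$ together with $\mathfrak g$), which by Lemma \ref{generale} and the standard criterion forces $Sp(2n)/C(u)^\circ$ to be non-spherical. First I would write down a convenient $u$: in terms of root subgroups one can take $u=x_{\a_1}(1)x_{\a_2}(1)\cdots$ arranged so that $u$ is a single Jordan block of size $4$ acting on a non-degenerate $4$-dimensional subspace and trivially on its $(2n-4)$-dimensional orthogonal complement; concretely $u = x_{e_1-e_2}(1)x_{e_2-e_3}(1)x_{e_3-e_4}(1)\cdot(\text{a short-root correction})$, or more simply realize the $Sp(4)$-block directly and embed it via the standard $C_2$-subgroup on $\{e_1,e_2,e_3,e_4\}$. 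The key computation is then $C(u)=C_{Sp(2n)}(u)$: the orthogonal complement part contributes a $C_{n-2}$-subgroup $K$ on $\{e_3,\ldots\}$ (or the appropriate simple roots), while the $Sp(4)$-part contributes a one-parameter unipotent centralizer, and there is a further unipotent radical piece analogous to the $R$ in Lemma \ref{nonsfericheC_n}.

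Next I would locate the invariants. Since $u^2\neq 1$ (as $u$ has order $4$ in characteristic $2$: a size-$4$ Jordan block $J$ satisfies $(J-1)^2\neq 0$), we have $u^2$ a non-trivial involution, of type $2^2\oplus 1^{2n-4}$, and $C(u)\le C(u^2)$. Writing $u=1+N$ in $M_{2n}(k)$ with $N=u-1$ nilpotent of rank $3$ on the $4$-block, one checks that $N$ itself, viewed as an element of $\mathfrak{sp}(2n)$ (it is, since $u\in Sp(2n)$ and in characteristic $2$ the condition $u^{t}Ju=J$ forces $N^{t}J+JN+N^{t}JN=0$, and one must verify $N^{t}J=JN$, i.e. $N\in\mathfrak{sp}$ — this holds for the standard Jordan-block model), is fixed by $C(u)$ under the adjoint action because $C(u)$ commutes with $u$ hence with $N$. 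Likewise $N^2=(u^2-1)$ is fixed and is non-zero. The crucial point is that $N$ and $N^2$ are linearly independent elements of $\mathfrak g$ — immediate since $N$ has rank $3$ and $N^2$ has rank $1$. Hence $C(u)$, and therefore $C(u)^\circ$, has a two-dimensional space of invariants in $\mathfrak g$, so $Sp(2n)/C(u)^\circ$ cannot be spherical (a spherical homogeneous space $G/H$ with $H$ reductive-mod-unipotent-radical contained in a parabolic has at most one invariant line in any simple summand; more directly one invokes the same argument as in Lemma \ref{nonsfericheC_n}, that a dense $B$-orbit would force the generic isotropy to have at most a one-dimensional invariant subspace in each relevant module).

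The main obstacle I expect is the bookkeeping in characteristic $2$: one must be careful that the chosen $u$ genuinely lies in the class $4\oplus 1^{2n-4}$ and not in some other class with the same partition but different $\e$-invariant, and that the two claimed invariants $N, N^2$ indeed lie in $\mathfrak{sp}(2n)$ rather than merely in $\mathfrak{gl}(2n)$. Concretely, on the standard symplectic basis $e_1,e_2,e_3,e_4$ with $f(e_1,e_4)=f(e_2,e_3)=1$, I would take the block $u e_1=e_1$, $u e_2 = e_2+e_1$, $u e_3 = e_3 + e_2$, $u e_4 = e_4 + e_3 + (\text{correction in }e_1)$ chosen so that $u\in Sp(4)$; then $N=u-1$ sends $e_2\mapsto e_1$, $e_3\mapsto e_2$, $e_4\mapsto e_3$, and one checks directly $f(Nx,y)+f(x,Ny)=0$ for all basis vectors (this is where the $e_1$-correction in $u e_4$ is forced). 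Granting this, $N$ is a genuine element of $\mathfrak{sp}(2n)$, $N^2$ sends $e_3\mapsto e_1$, $e_4\mapsto e_2$ and is also in $\mathfrak{sp}(2n)$, and they are visibly independent. Finally, as in the previous lemma, since $u$ and both invariants lie in (the Lie algebra of) $SO(2n)$ as well when one sets up the analogous $D_n$ embedding, one could also record non-sphericity of the corresponding $SO(2n)$-class, though that is not needed for the statement as given.
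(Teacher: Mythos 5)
There is a genuine gap, and it sits exactly in the step you flag as ``the main obstacle'': the claim that $N=u-1$ lies in $\mathfrak{sp}_{2n}$. It does not, for \emph{any} representative of the class $4\oplus 1^{2n-4}$. In characteristic $2$ an endomorphism $X$ lies in $\mathfrak{sp}(V)$ iff $f(Xx,y)+f(x,Xy)=0$ for all $x,y$; combined with $f(ux,uy)=f(x,y)$ for $u=1+N$ this is equivalent to $f(Nx,Ny)=0$ for all $x,y$, i.e.\ to $\mathrm{im}(N)$ being totally isotropic. Since $f(Nx,y)=f(ux,Ny)$, one has $\mathrm{im}(N)^{\perp}=\ker(N)$, so $\mathrm{im}(N)$ is totally isotropic iff $\mathrm{im}(N)\subseteq\ker(N)$, i.e.\ iff $u^{2}=1$. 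For an element of order $4$ this fails, so $N\notin\mathfrak g$, your first invariant disappears, and only $N^{2}=u^{2}-1$ survives --- a single invariant proves nothing. (Concretely, in your $Sp(4)$ model no correction of $ue_4$ by a multiple of $e_1$ alone even makes $u$ symplectic, since $f(ue_3,ue_4)=f(e_2,e_3)=1\neq 0=f(e_3,e_4)$ regardless of that correction; one needs $ue_4=e_4+e_3+e_2+ce_1$, and then $f(Ne_3,e_4)=0\neq 1=f(e_3,Ne_4)$ exhibits the failure of $N\in\mathfrak{sp}_4$ directly.) Note that in Lemma \ref{nonsfericheC_n} the paper's invariant for the order-$4$ element of type $3^{2}\oplus 1^{2n-6}$ is the sum of root vectors $e_{\a_2}+e_{\g-\a_2}$, a genuine element of $\mathfrak g$, not $u-1$.

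The paper sidesteps all of this by working in the natural module rather than the adjoint one: it takes $u=x_{\a_1}(1)x_{2e_2}(1)$, computes $C(u)^{\circ}=CR$ explicitly, with $C$ the $C_{n-2}$-subgroup on $\{\a_3,\dots,\a_n\}$ and $R$ a $(2n-2)$-dimensional product of root subgroups inside $U$, and observes that $C(u)^{\circ}$ fixes the two basis vectors $v_1,v_2$ of $V=k^{2n}$ pointwise; since $V$ is irreducible in all characteristics, two independent fixed vectors are incompatible with sphericity of $Sp(2n)/C(u)^{\circ}$. (Your worry about the $\e$-invariant is moot: $c(4)=1$ is odd, so $\e(4)=1$ is forced and the partition $4\oplus 1^{2n-4}$ labels a unique class.) To rescue your framing you would need to produce a second honest invariant, in $V$ or in $\mathfrak g$, by an explicit computation of the centralizer of the kind the paper carries out; $u-1$ is not available for that purpose.
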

\pf
Let $u$ be an element in $\O=4\oplus 1^{2n-4}$. In $Sp(2n)$, we may take $u=x_{\a_1}(1)x_{\delta}(1)$, where $\delta=2e_2$, and get $C(u)^\circ\leq P$, where $P$ is the parabolic subgroup $P_{I\setminus\{\a_1,\a_2\}}$. Then $C(u)^\circ=CR$, where $C$ is the $C_{n-2}$-subgroup of $G$ corresponding to $\{\a_3,\ldots,\a_n\}$, and $R$ is a subgroup of $U$.
In fact $\dim R=2n-2$, and $R$ is the product of $X_\a$'s, where 
$\a=e_1\pm e_i$, $i=3,\ldots,n$ or $\a=e_1+e_2$ or $\a=2e_1$.

It follows that $C(u)^\circ$ fixes the first 2 basis vectors $v_1$ and $v_{2}$ of the natural module of $G$, so that $Sp(2n)/C(u)^\circ$ is not spherical.\cvd

We have proved

\begin{proposition}\label{sfericheC_n_completo} Let $\O$ be a non-trivial unipotent conjugacy class of $Sp(2n)$ in characteristic 2. Then $\O$ is spherical if and only if it consists of involutions.\cvd
\end{proposition}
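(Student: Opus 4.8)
The plan is to assemble the results already obtained in this subsection. The ``if'' direction is nothing new: if $\O$ consists of involutions it is spherical by Proposition \ref{sfericheC_n}. So I would concentrate on the converse, taking $\O=\O_u$ with $u$ a non-trivial unipotent element which is \emph{not} an involution, and aiming to show $\O$ is not spherical.

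First I would cut the problem down to a short explicit list. Since $\ch k=2$ the order of $u$ is a power of $2$, and a non-involution has order at least $4$. If the order is strictly larger than $4$, I would choose $v$ of order $4$ inside $\<u$; then $C(u)\leq C(v)$, hence $C(u)^\circ\leq C(v)^\circ$, and the $G$-equivariant surjection $G/C(u)^\circ\twoheadrightarrow G/C(v)^\circ$ sends a dense $B$-orbit to a dense $B$-orbit. Thus non-sphericity of $\O_v$ would force non-sphericity of $\O_u$ (using Lemma \ref{generale} to pass between $\O$ and $G/H^\circ$), and we are reduced to the set $X$ of classes of elements of order exactly $4$. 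Now $X$ is finite, and by \cite{Knop}, Theorem 2.2, sphericity descends along the closure order on $\cu(G)$; consequently the non-spherical classes form an up-set, and since every element of $X$ dominates a minimal element of $X$, it suffices to prove that the minimal members of $X$ are non-spherical.

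Finally I would identify those minimal classes and invoke the preceding lemmas. An order-$4$ unipotent element is exactly one all of whose Jordan blocks have size $\le 4$ with at least one block of size $3$ or $4$; feeding this together with the $Sp(2n)$-parity conditions on pairs $(\l,\e)$ into Spaltenstein's partial order (\cite{spal}, \S 2.10) should single out $3^2\oplus 1^{2n-6}$ (present only for $n\ge 3$) and $4\oplus 1^{2n-4}$ as the minimal elements of $X$, each of them a single class because the $\e$-decoration is forced. These two classes are shown to be non-spherical in Lemma \ref{nonsfericheC_n} and Lemma \ref{nonsferiche4} respectively, so every class in $X$ — hence every non-trivial unipotent class not consisting of involutions — is non-spherical, which finishes the proof. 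The step I expect to be the main obstacle is the combinatorial verification that these two classes really are \emph{all} the minimal order-$4$ classes in Spaltenstein's order (and, separately, checking the small case $n=2$, where $3^2\oplus 1^{2n-6}$ drops out and only $4\oplus 1^{2n-4}$ survives); everything else is a bookkeeping-free combination of cited results.
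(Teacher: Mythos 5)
Your proposal is correct and follows essentially the same route as the paper: the ``if'' direction is Proposition \ref{sfericheC_n}, the reduction to order-$4$ elements via $C(u)\leq C(v)$ and then to the minimal classes of $X$ via \cite{Knop}, Theorem 2.2, and the identification of those minimal classes as $3^2\oplus 1^{2n-6}$ ($n\geq 3$) and $4\oplus 1^{2n-4}$, handled by Lemmas \ref{nonsfericheC_n} and \ref{nonsferiche4}, are exactly the steps in the text preceding the proposition. Your extra remarks (the characterization of order-$4$ elements by Jordan block sizes in characteristic $2$, and the small case $n=2$) are correct fillings-in of details the paper leaves implicit.
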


\medskip

\subsubsection{Type $D_n$, $n\geq 4$.}

Let $m=\left[\frac{n}2\right]$. We put $\b_i=e_{2i-1}+e_{2i}$, $\d_i=e_{2i-1}-e_{2i}$ for $i=1,\ldots, m$.
For $\ell=1,\ldots,m-1$ we put $J_\ell=\{2\ell+1,\ldots,n\}$, $J_m=\vuoto$, 
$K_\ell=J_\ell\cup\{1,3,\ldots,2\ell-1\}$ for $\ell=1,\ldots,m$. Also we put $K_m'=\{1,3,\ldots,n-3,n\}$.

Let $u$ be an involution of $G=SO(2n)$. Then the partition corresponding to $u$ is of the form $\l=2^{c_2}\oplus 1^{c_1}$, with $c_2=2i$, $c_1=2n-4i$, $i=1,\ldots,m$.

For each $\ell=1,\ldots,m-1$ there are 2 conjugacy classes corresponding to $\l=2^{2i}\oplus 1^{2n-4i}$: we denote by $X_\ell$ the class $2^{2\ell}_0\oplus 1^{2n-4\ell}$, and by $Z_\ell$ the class $2^{2\ell}\oplus 1^{2n-4\ell}$. If $\ell=m$, then we denote by $Z_m$ the class $2^{2m}\oplus 1^{2n-4m}$. The conjugacy class in $O(2n)$ corresponding to  $2^{2m}_0\oplus 1^{2n-4m}$ is a single class $X_m$ in $SO(2n)$ if $n$ is odd, while it splits into 
 2 conjugacy classes $X_m$ and $X_m'$ in $SO(2n)$ if $n$ is even.

We have
$$
\dim Z_\ell=4\ell(n-\ell)\quad,\quad
\dim X_\ell=2\ell(2n-2\ell-1)\quad\text{
for $\ell=1,\ldots,m$}
$$
with 
$\dim X_m'=\dim X_m$ if $n$ is even.

We have chosen the notation so that for each conjugacy class of involutions $\O$ in $G$, the conjugacy class $\cal C$ in $G_\C$ denoted by the same symbol in \cite{Cos} \S 4.3, has the same dimension. For the corresponding $w$, we write $w$ as a product of commuting reflections, $w=s_{\gamma_1}\cdots s_{\gamma_t}$. It is straightforward to prove that in each case the element $x=n_{\gamma_1}\cdots n_{\gamma_t}$ lies in $\O$. We summarize in the following tables the results obtained:
\begin{center}
\vskip-20pt
$$
\begin{array}{|c||c|c|c|c|}
\hline
\O  &J& w(\O) & x\in \O\cap Bw(\O)B&\dim \O \\ 
\hline
\hline 
\begin{array}{c}
Z_\ell=2^{2\ell}\oplus 1^{2n-4\ell}
\\
\ell=1,\ldots,m
\end{array} &\displaystyle J_\ell & s_{\b_1}s_{\d_1}\cdots s_{\b_\ell}s_{\d_\ell}&n_{\b_1}n_{\d_1}\cdots n_{\b_\ell}n_{\d_\ell}&4\ell(n-\ell)\\
\hline
\hline
\begin{array}{c}
X_{\ell}=2^{2\ell}_0\oplus 1^{2n-4\ell}
\\
\ell=1,\ldots,m
\end{array}
&  \displaystyle K_{\ell}& \quad\displaystyle s_{\b_1}\cdots s_{\b_\ell}\quad& n_{\b_1}\cdots n_{\b_\ell}&2\ell(2n-2\ell-1)
\\
\hline
\begin{array}{c}
X_{m}'=(2^{2m}_{0})'
\end{array}
&  \displaystyle K_m'& \quad\displaystyle s_{\b_1}\cdots s_{\b_{m-1}}s_{\a_{n-1}}\quad& n_{\b_1}\cdots n_{\b_{m-1}}n_{\a_{n-1}}&n(n-1)\\
\hline
\end{array}
$$
\end{center}
\begin{center} Table \totable:  Involutions in $D_n$,
$n\geq 4$, $n=2m$.
\end{center}

\begin{center}
\vskip-20pt
$$
\begin{array}{|c||c|c|c|c|}
\hline
\O  &J& w(\O) & x\in \O\cap Bw(\O)B&\dim \O \\ 
\hline
\hline 
\begin{array}{c}
Z_\ell=2^{2\ell}\oplus 1^{2n-4\ell}
\\
\ell=1,\ldots,m
\end{array} &\displaystyle J_\ell & s_{\b_1}s_{\d_1}\cdots s_{\b_\ell}s_{\d_\ell}&n_{\b_1}n_{\d_1}\cdots n_{\b_\ell}n_{\d_\ell}&4\ell(n-\ell)\\
\hline
\hline
\begin{array}{c}
X_{\ell}=2^{2\ell}_0\oplus 1^{2n-4\ell}
\\
\ell=1,\ldots,m
\end{array}
&  \displaystyle K_{\ell}& \quad\displaystyle s_{\b_1}\cdots s_{\b_\ell}\quad& n_{\b_1}\cdots n_{\b_\ell}&2\ell(2n-2\ell-1)
\\
\hline
\end{array}
$$
\end{center}
\begin{center} Table \totable:  Involutions in $D_n$,
$n\geq 4$, $n=2m+1$.
\end{center}

By Theorem \ref{metodo}, we have proved

\begin{proposition}\label{sfericheD_n} Let $\O$ be the conjugacy class of an involution of $SO(2n)$ in characteristic 2. Then $\O$ is spherical.\cvd
\end{proposition}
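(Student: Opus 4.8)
The plan is to reduce everything to Theorem \ref{metodo}, applied to each involution class $\O$ of $SO(2n)$ through the data already displayed in the tables above. Concretely, for a class $\O$ with associated subset $J\subseteq\Pi$, put $w=w_0\wJ$ and write $w=s_{\gamma_1}\cdots s_{\gamma_t}$ as the product of commuting reflections recorded in the corresponding row; set $x=n_{\gamma_1}\cdots n_{\gamma_t}$. It then suffices to check three things: (i) $x\in BwB$; (ii) $\dim\O=\ell(w)+\rk(1-w)$; (iii) $x\in\O$. Granting (i)--(iii), Theorem \ref{metodo} yields at once that $\O$ is spherical (and that $w=w(\O)$, with $B.x$ the dense $B$-orbit).

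Item (i) is immediate: $x$ lies in $N$ and its image in $W=N/T$ is $s_{\gamma_1}\cdots s_{\gamma_t}=w$, hence $x\in\dot w\,T\subseteq BwB$. For item (ii) I would argue combinatorially. Since the $\gamma_i$ are pairwise orthogonal, the $(-1)$-eigenspace of $w$ in the geometric representation is their span, of dimension $t$, so $\rk(1-w)=t$; and $\ell(w)=\ell(w_0)-\ell(\wJ)=|\Phi^+|-|\Phi_J^+|$. In each row the type of $\Delta_J$ — a product of an $A_1$-part and a $D$-part (possibly empty), e.g.\ $D_{n-2\ell}$ for $Z_\ell$, $A_1^\ell\times D_{n-2\ell}$ for $X_\ell$, $A_1^m$ for $X_m'$ — is read off directly, so $|\Phi_J^+|$ is known, and substituting gives exactly the value of $\dim\O$ in the table; for instance for $Z_\ell$ one gets $n(n-1)-(n-2\ell)(n-2\ell-1)+2\ell=4\ell(n-\ell)$. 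The same kind of routine computation in $W$ confirms that $w_0\wJ$ really equals the displayed product of reflections.

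Item (iii) carries the actual content. Using (\ref{p=2}) with $g=x_{\gamma_1}(1)\cdots x_{\gamma_t}(1)$ we have $x=g\,x_{-\gamma_1}(1)\cdots x_{-\gamma_t}(1)\,g^{-1}$, so $x$ is conjugate to $y=x_{-\gamma_1}(1)\cdots x_{-\gamma_t}(1)\in U^-$, which can be analysed directly on the natural $2n$-dimensional module $V$. A single $x_{-\gamma}(1)$ ($\gamma\in\Phi$) satisfies $(x_{-\gamma}(1)-1)^2=0$ and has rank $2$, contributing a pair of Jordan blocks of size $2$. In the $X_\ell$ rows the roots $\beta_i=e_{2i-1}+e_{2i}$ act on pairwise disjoint coordinate blocks, so $y$ has partition $2^{2\ell}\oplus 1^{2n-4\ell}$ and $f((y-1)v,v)=0$ for all $v\in V$, giving $\e_2=0$, i.e.\ $\O=X_\ell$. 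In the $Z_\ell$ rows the extra roots $\delta_i=e_{2i-1}-e_{2i}$ reuse the coordinates of $\beta_i$; the overlap keeps the rank of $y-1$ equal to $2\ell$, so the partition is still $2^{2\ell}\oplus 1^{2n-4\ell}$, but now $f((y-1)v,v)\ne 0$ for a suitable $v$, so $\e_2=1$ and $\O=Z_\ell$. For $X_m'$ (the case $n$ even, in which $2^{2m}_0\oplus 1^{2n-4m}$ splits into two $SO(2n)$-classes) one must in addition check that the table's representative falls in the intended one of the two classes. The main obstacle is precisely this last item: the computation is elementary linear algebra, but one must keep the characteristic-$2$ orthogonal-group bookkeeping straight — correctly reading off $\e$ from $y$, and correctly matching the split classes when $n$ is even — whereas (i), (ii) and the reduction to Theorem \ref{metodo} are routine once the tables are granted.
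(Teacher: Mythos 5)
Your proposal is correct and follows essentially the same route as the paper: there too one writes $w=w_0\wJ$ as a product of commuting reflections $s_{\gamma_1}\cdots s_{\gamma_t}$, checks that $x=n_{\gamma_1}\cdots n_{\gamma_t}$ lies in $\O\cap BwB$ with $\dim\O=\ell(w)+\rk(1-w)$, and invokes Theorem \ref{metodo}. The one loose end you flag --- which of the two split classes $X_m$, $X_m'$ ($n$ even) a given representative lands in --- is harmless for sphericality, since these two classes are exchanged by conjugation by an element of $O(2n)\setminus SO(2n)$, which permutes Borel subgroups of $SO(2n)$.
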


Our aim is to show that a (non-trivial) unipotent conjugacy class $\O_u$ is spherical if and only if $u$ is an involution. Using the same arguments as in case $C_n$, we are left to consider the set $X$ of conjugacy classes of elements of order 4, and then show that the minimal elements in $X$ are not spherical. From the explicit definition of a partial order on pairs $(\l,\e)$ such that $C_{\l,\e}\leq C_{\mu,\phi}\iff (\l,\e)\leq (\mu,\phi)$ given in
 \cite{spal}, \S 2.10, it follows that the minimal element in $X$ is the class $3^2\oplus 1^{2n-6}$. By Lemma \ref{nonsfericheC_n}, this class is not spherical. We have therefore proved 
 
 \begin{proposition}\label{sfericheD_n_completo} Let $\O$ be a non-trivial unipotent conjugacy class of $SO(2n)$ in characteristic 2. Then $\O$ is spherical if and only if it consists of involutions.\cvd
\end{proposition}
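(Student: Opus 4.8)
The "if" part of the statement is exactly Proposition~\ref{sfericheD_n}, so the task reduces to proving the converse: a non-trivial unipotent class $\O=\O_u$ of $SO(2n)$ with $\ch k=2$ that does not consist of involutions is not spherical. The plan is to reuse verbatim the strategy already carried out for $Sp(2n)$ in the proof of Proposition~\ref{sfericheC_n_completo}. Since a unipotent element in characteristic $2$ has order a power of $2$, the hypothesis "non-trivial and not an involution" means that $u$ has order $\geq 4$.

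First I would reduce to elements of order exactly $4$. If $u$ has order $>4$, choose $v$ of order $4$ in the cyclic group $\langle u\rangle$; then $C(u)\leq C(v)$, so the natural $G$-equivariant map $G/C(u)\to G/C(v)$ is dominant, and a dense $B$-orbit on $G/C(u)$ would force one on $G/C(v)$. Hence, if $\O_v$ is not spherical then neither is $\O_u$; together with Lemma~\ref{generale} this leaves only the finite set $X$ of conjugacy classes of elements of order $4$. Next, by \cite{Knop}, Theorem~2.2, the complexity of a $G$-orbit does not drop under degeneration, so sphericity passes from a class to every class in its closure; consequently it suffices to show that the classes which are minimal in $X$ for the closure order are not spherical.

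It then remains to locate those minimal elements, using the explicit partial order on the parameters $(\l,\e)$ from \cite{spal}, \S2.10. Here the situation differs slightly from type $C_n$: among the two candidate minimal partitions $3^2\oplus 1^{2n-6}$ and $4\oplus 1^{2n-4}$ that appeared in the symplectic case, the second has dual first column $2n-3$, which is odd, so it does not label a class lying in $SO(2n)$. Thus $X$ has the single minimal element $3^2\oplus 1^{2n-6}$ (admissible since $n\geq 4$ gives $2n-6\geq 2$, the multiplicities $c(3)=2$ and $c(1)=2n-6$ are even, $\l^\ast_1=2n-4$ is even, and, the parts of size $3$ being odd, this class does not split in $SO(2n)$). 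By Lemma~\ref{nonsfericheC_n}, which treats precisely this class in both $C_n$ and $D_n$, it is not spherical, and the proof is complete. The only genuinely delicate point is this last combinatorial step — confirming, from Spaltenstein's order on the $(\l,\e)$-parameters, that $3^2\oplus 1^{2n-6}$ really is the unique minimal order-$4$ class in $SO(2n)$; everything else is either the same dominance/degeneration bookkeeping as in the symplectic case or a direct appeal to results already established.
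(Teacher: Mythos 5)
Your proposal is correct and follows the paper's own proof essentially verbatim: reduce via $C(u)\leq C(v)$ and Lemma \ref{generale} to classes of order $4$, invoke \cite{Knop}, Theorem 2.2 to pass to minimal elements of $X$ in the closure order, identify the unique minimal class as $3^2\oplus 1^{2n-6}$ via Spaltenstein's partial order, and conclude by Lemma \ref{nonsfericheC_n}. The only addition is your explicit check that $4\oplus 1^{2n-4}$ is excluded because $\l_1^\ast=2n-3$ is odd, a detail the paper leaves implicit; this is a correct and welcome clarification rather than a different argument.
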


\begin{remark}
{\rm 
From our discussion, it follows that for $D_n$ the map $\pi_G:X^{G'}\to X^G$ defined in \cite{spal}, Theorem III.5.2 induces an isomorphism of p.o. sets between $X^{G'}_{\sph}\to X^G_{\sph}$ where $X^{G'}_{\sph}$, $X^G_{\sph}$ are the corresponding sets of spherical unipotent classes. In particular, every spherical unipotent conjugacy class has a representative of the form $\prod_{\a\in K}x_{\a}(1)$ for a certain set of pairwise orthogonal simple roots $K$.
}
\end{remark}

\medskip
 
\subsection{Exceptional groups.}

For the exceptional groups, we use \cite{LLS}, Table 2. In this table, for each group $G$, there are all unipotent conjugacy classes $\O_u$, in every characteristic, for which the dimension of $C(u)$ is greater than a certain number $l_G$. From this we deduce the following table

\begin{center}
\vskip-20pt
$$
\begin{array}{|c|c|c|c|c|}
\hline
G  & \dim B & \text{ $u$ with $\dim \O_u\leq \dim B$}&\dim \O_u& \o{C(u)/C(u)^\circ}  \\ 
\hline
\hline
E_6
& 42&
\begin{array}{c}
A_1,\
2A_1,\
3A_1,\
A_2
\end{array}&
22, 32, 40, 42&
1, 1, 1, 2\\
\hline
E_7
& 70&
\begin{array}{c}
A_1,\
2A_1,\
3A_1'',\
3A_1',\
A_2,\
4A_1
\end{array}&
34, 52, 54, 64, 66, 70&
1, 1, 1, 1, 2,1\\
\hline
E_8
& 128&
\begin{array}{c}
A_1,\
2A_1,\
3A_1,\
A_2,\
4A_1
\end{array}&
58, 92, 112, 114, 128&
1, 1, 1, 2, 1\\
\hline
F_4
& 28&
\begin{array}{c}
A_1,\
\tilde A_1 (p=2),\
\tilde A_1 (p\not=2),\
\tilde A_1^{(2)} (p=2),\
A_1\tilde A_1
\end{array}&
16, 16, 22, 22, 28&
1, 1, 2, 1, 1\\
\hline
G_2
& 8&
\begin{array}{c}
A_1,\
\tilde A_1 (p=3),\
\tilde A_1 (p\not=3),\
\tilde A_1^{(3)} (p=3)
\end{array}&
6, 6, 8, 8&
1, 1, 1, 1\\
\hline
\end{array}
$$
\end{center}
\begin{center} Table \totable\label{tavolauno}: Unipotent classes of small dimension in exceptional groups.
\end{center}
where the value of $\o{C(u)/C(u)^\circ}$ for $E_7$ refers to the adjoint group (see \cite{Ale}).

The unipotent conjugacy classes appearing in Table \ref{tavolauno} are the only candidates to being spherical. We shall show that they are all spherical, except for the classes of type $A_2$ in $E_6$, $E_7$ and $E_8$. Note that when $p=2$, then all classes of involutions appear in Table \ref{tavolauno} by the results in \cite{AS}.

\medskip
\begin{lemma}\label{nonsferiche} Let $\O$ be the unipotent conjugacy class of type $A_2$ in $E_6$, $E_7$ or $E_8$. Then $\O$ is not spherical (in any characteristic).
\end{lemma}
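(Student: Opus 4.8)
The plan is to use Theorem \ref{metodo} in reverse, together with the dimension data for the $A_2$ class, to show that no involution $w\in W$ can ``support'' this class. Concretely, since $\O^{-1}=\O$ for a unipotent class, the element $w=w(\O)$ is an involution, and by Theorem \ref{metodo} we must have $\dim\O\geq\ell(w)+\rk(1-w)$. If I can show that $\ell(w)+\rk(1-w)>\dim\O$ for \emph{every} involution $w\in W$ with $\O\cap BwB\neq\emptyset$ — or more crudely, that $\max\{\ell(w)+\rk(1-w):w^2=1\}$ against the relevant $w$ falls short — then $\O$ cannot be spherical, because sphericity would force equality $\dim\O=\ell(w(\O))+\rk(1-w(\O))$. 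Actually the cleanest route is the contrapositive of the last sentence of Theorem \ref{metodo}: if $\O$ were spherical then $\dim\O=\ell(w)+\rk(1-w)$ for $w=w(\O)$, an involution; so it suffices to produce a lower bound on $\ell(w)+\rk(1-w)$ for the specific $w$ forced by the $A_2$ class that strictly exceeds $\dim\O$, or alternatively to exhibit an element of $\overline\O$ lying in some $BzB$ with $z$ too large.

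First I would pin down the relevant numerology. For the $A_2$ unipotent class in $E_6$, $E_7$, $E_8$ the dimensions are $42$, $66$, $114$ respectively, as listed in Table \ref{tavolauno}, and these dimensions are independent of the characteristic (the $A_2$ class is not among the exceptions $C_n$ $F_4$ $G_2$ where bad characteristic produces extra classes, and in any case the dimension of a unipotent class can be read from the Bala–Carter/weighted Dynkin data which is characteristic-free here). Next, over $\C$ one knows precisely which conjugacy classes are spherical, and the $A_2$ class is \emph{not} spherical over $\C$ (its height is $4$, exceeding Panyushev's bound of $3$; equivalently it has no representative of the form $e_{\gamma_1}+\cdots+e_{\gamma_t}$ with the $\gamma_i$ pairwise orthogonal simple roots — a regular nilpotent in an $A_2$-subsystem needs two non-orthogonal simple roots). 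So over $\C$ there is no involution $w$ with $\O_\C\cap BwB\neq\emptyset$ and $\dim\O_\C=\ell(w)+\rk(1-w)$.

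The key step is then to transfer this obstruction to bad characteristic. The element $w(\O)$ is an involution (as noted in the text, using that unipotents are conjugate to their inverses, \cite{Cuno}, \cite{Cdue}, \cite{Lus}, or the stronger result of \cite{intersezioni}). I would take an explicit representative $u$ of the $A_2$ class, say $u=x_{\a}(1)x_{\b}(1)$ for suitable non-orthogonal simple roots $\a,\b$ generating an $A_2$-subsystem, valid in any characteristic, and analyze its centralizer $C(u)^\circ$ exactly as in the proofs of Lemmas \ref{nonsfericheC_n} and \ref{nonsferiche4}: show $C(u)^\circ$ has at least two linearly independent invariants in $\mathfrak g$ (one coming from $u$ itself via $u-1$, another from a power or from the $A_2$-structure), which forces $\dim G-\dim C(u)^\circ < \dim G - 2\dim B + \dim T + \ldots$ — more precisely, two independent invariants in $\mathfrak g$ for the isotropy subgroup contradict sphericity of $G/C(u)^\circ$, hence of $\O$ by Lemma \ref{generale}. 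Alternatively, and perhaps more in the spirit of the surrounding argument, I would locate the minimal involution $w$ with $u\in BwB$ and directly compute $\ell(w)+\rk(1-w)$, checking it is $<\dim\O$; since $w^2=1$ is forced, Theorem \ref{metodo} then yields $\dim B.x<\dim\O$ for $x\in\O\cap BwB$, so no $B$-orbit in $\O$ is dense.

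The main obstacle is the characteristic-$2$ (and $3$) case, where the $A_2$ class might a priori behave differently: one must make sure that the representative $u=x_\a(1)x_\b(1)$ really has order and Jordan type matching the $A_2$ class in bad characteristic (in $E_6,E_7,E_8$ the primes $2,3$ are bad, so care is needed), and that the invariant-theoretic computation of $C(u)^\circ$ goes through. I expect the resolution to parallel \cite{AS} and the computations in the earlier lemmas: exhibit the parabolic $P$ with $C(u)^\circ\leq P$, decompose $C(u)^\circ=CR$ with $C$ a Levi part and $R\subseteq R_uP$, and count invariants on $\mathfrak g$ — concluding that $C(u)^\circ$ fixes at least a two-dimensional subspace, which by Knop's criterion (\cite{Knop}) or directly precludes a dense $B$-orbit. \cvd
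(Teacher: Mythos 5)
Your proposal assembles the right ingredients (a representative $u$ regular in an $A_2$-subsystem, the explicit shape $C(u)^\circ=CR$ inside a maximal parabolic, and the fact that the $A_2$ class is not spherical over $\C$), but neither of the two mechanisms you offer for concluding actually closes the argument, and the paper uses a third one. Your first route --- ``sphericity would force $\dim\O=\ell(w)+\rk(1-w)$'' --- rests on the characterization of spherical classes by that equality, which is a theorem over $\C$ (\cite{CCC}) and in good odd characteristic (\cite{Giovanna-good}), but is precisely what is \emph{not} available in bad characteristic; Theorem \ref{metodo} only gives the inequality $\dim\O\geq\ell(w)+\rk(1-w)$, which holds for every class whatsoever and therefore cannot detect non-sphericity. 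Your second route, imitating Lemma \ref{nonsfericheC_n}, needs two independent $C(u)^\circ$-invariants in $\mathfrak g$; but there the second invariant came from $u^2$ being a nontrivial long root element (a phenomenon of $p=2$) together with the matrix identity $x_\gamma(1)=1+e_\gamma$ in $M_{2n}(k)$. For the $A_2$ class in $E_6$, $E_7$, $E_8$ one must also handle $p=3$ (and $p=5$ in $E_8$), where $u$ has order $p$ so its powers yield nothing new, and no matrix model is at hand; it is not clear what the second invariant should be, so the computation you ``expect to go through'' is exactly the gap.

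What the paper actually does is a reduction-mod-$p$ argument: it computes $C(u)^\circ=CR$ explicitly (as you propose), observes via \cite{LLS}, Table~2 that this group has the same type in every characteristic and is defined over $\Z$, recalls that $G/C(u)^\circ$ is not spherical in characteristic zero, and then invokes \cite{Brundan}, Theorem~2.2(i) to transfer non-sphericity to every positive characteristic, concluding by Lemma \ref{generale}. You do state the characteristic-zero non-sphericity (via Panyushev's height criterion), but you never connect it to positive characteristic; that semicontinuity step is the missing idea.
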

\pf
Let $u$ be an element in $\O$. From \cite{LLS}, Table 2, it follows that the type of $C(u)^\circ$ is independent of the characteristic. For completeness, we determine $C(u)^\circ$ in all cases.

In $E_8$, we may take $u=x_{\a_8}(1)x_{\beta-\a_8}(1)$, where $\beta$ is the highest root, and get $C(u)^\circ\leq P$ where $P$ is the maximal parabolic subgroup $P_{I\setminus\{\a_8\}}$. Then $C(u)^\circ=CR$, where $C$ is the $E_6$-subgroup of $G$ corresponding to $\{\a_1,\ldots,\a_6\}$, and 
$$
R=\{g=\prod_{\a\in\Phi^+} x_\a(k_\a)\in R_uP\mid k_{\a_8}=k_{\beta-\a_8}\}.
$$

In $E_7$, we may take $u=x_{\a_1}(1)x_{\beta-\a_1}(1)$, where $\beta$ is the highest root, and get $C(u)^\circ\leq P$ where $P$ is the maximal parabolic subgroup $P_{I\setminus\{\a_1\}}$. Then $C(u)^\circ=CR$, where $C$ is the $A_5$-subgroup of $G$ corresponding to $\{\a_2,\a_4,\a_5,\a_6,\a_7\}$, and 
$$
R=\{g=\prod_{\a\in\Phi^+} x_\a(k_\a)\in R_uP\mid k_{\a_1}=k_{\beta-\a_1}\}.
$$

In $E_6$, we may take $u=x_{\a_2}(1)x_{\beta-\a_2}(1)$, where $\beta$ is the highest root, and get $C(u)^\circ\leq P$ where $P$ is the maximal parabolic subgroup $P_{I\setminus\{\a_2\}}$. Then $C(u)^\circ=CR$, where $C$ is the $A_2\times A_2$-subgroup of $G$ corresponding to $\{\a_1,\a_3,\a_5,\a_6\}$, and 
$$
R=\{g=\prod_{\a\in\Phi^+} x_\a(k_\a)\in R_uP\mid k_{\a_2}=k_{\beta-\a_2}\}.
$$

It is well known that the class $A_2$ is not spherical in $E_6$, $E_7$ or $E_8$ over any algebraically closed field of characteristic zero. We may now apply \cite{Brundan}, Theorem 2.2 (i). Note that the groups $C(u)^\circ$ involved are all defined over $\Z$, and the argument in the proof of \cite{Brundan}, Theorem 2.2 (i) is valid in our situation. Therefore $G/C(u)^\circ$ is not spherical in any positive characteristic. It follows that $\O_u$ is not spherical by Lemma \ref{generale}.\cvd

\subsubsection{Type $E_6$.}

We put
$$
\begin{array}{cclll}
\b_1 = (1,2,2,3,2,1),&
\b_2 =(1,0,1,1,1,1)\\
\b_3 =(0,0,1,1,1,0),&
\b_4 = (0,0,0,1,0,0)
\end{array}
$$
\medskip
For groups of type $E_6$ we have to consider $p=2$, $3$. If $p=3$, then we may apply the arguments in \cite{CCC}, Theorem 13 to prove that the orbits of type $A_1$, $2A_1$ and $3A_1$ satisfy $(*)$, hence are spherical, since to handle $3A_1$ we need results for $D_4$ which holds due to
\cite{Giovanna-good}, Theorem 3.4 and its proof (in fact what we need is that the maximal spherical unipotent conjugacy class $\O'$ of $D_4$ satisfies $(*)$ when $p=3$). So now assume $p=2$. Then again we may use the proof of \cite{CCC}, Theorem 3.4 to deal with $A_1$ and $2A_1$. Note that in these cases
$$
x_{-\beta_1}(1)\in A_1\cap Bs_{\beta_1} B\cap U^-\quad,\quad
x_{-\beta_1}(1)x_{-\beta_2}(1)\in 2A_1\cap Bs_{\beta_1}s_{\beta_2}B\cap U^-
$$
with $x_{-\beta_1}(1)\sim n_{\beta_1}$, $x_{-\beta_1}(1)x_{-\beta_2}(1)\sim n_{\beta_1}n_{\beta_2}$. To deal with $3A_1$, we still may use the arguments in the proof of \cite{CCC}, Theorem 3.4 since we have shown in \S 3.1.3 that the maximal spherical unipotent conjugacy class $\O'$ of $D_4$ satisfies $(*)$ when $p=2$, or directly observe that $x=n_{\beta_1}n_{\beta_2}n_{\beta_3}n_{\a_4}$ is an involution in $Bn_{\beta_1}n_{\beta_2}n_{\beta_3}n_{\a_4}B=Bw_0B$. Then $\dim \O_x\geq \ell(w_0)+\rk(1-w_0)=40$ by Theorem \ref{metodo}, so that $x\in 3A_1$ by Table \ref{tavolauno}, since elements in $A_2$ are not involutions. We have proved

\begin{proposition}\label{unip_E_6} Let $\O$ be a non-trivial unipotent conjugacy class in $E_6$. Then $\O$ is spherical if and only if $\O=A_1, 2A_1$ or $3A_1$. 
In each case $\O$ satisfies $(*)$. If $\ch k=2$, these are precisely the classes consisting of involutions.
\end{proposition}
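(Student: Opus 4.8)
The plan is to combine the explicit constructions already sketched in the excerpt with the dimension bound of Theorem \ref{metodo} and the non-sphericity of the $A_2$ class (Lemma \ref{nonsferiche}). By Table \ref{tavolauno}, the only non-trivial unipotent classes of $E_6$ with $\dim \O_u \leq \dim B = 42$ are $A_1$, $2A_1$, $3A_1$ and $A_2$; since every spherical class must satisfy $\dim \O \leq \ell(w)+\rk(1-w) \leq \dim B$ by Theorem \ref{metodo}, no other class can be spherical. Lemma \ref{nonsferiche} rules out $A_2$ in every characteristic. So the entire content is to show that $A_1$, $2A_1$, $3A_1$ are spherical when $p=2,3$, and in each case that they satisfy $(*)$.

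First I would dispose of $A_1$ and $2A_1$ uniformly. Using the roots $\b_1,\b_2$ defined just above the statement, one checks directly (exactly as in \cite{CCC}, Theorem 3.4, whose argument is characteristic-free here) that $x_{-\b_1}(1) \in A_1 \cap Bs_{\b_1}B \cap U^-$ and $x_{-\b_1}(1)x_{-\b_2}(1) \in 2A_1 \cap Bs_{\b_1}s_{\b_2}B \cap U^-$, with these elements conjugate to $n_{\b_1}$ and $n_{\b_1}n_{\b_2}$ respectively. Since $\b_1,\b_2$ are orthogonal, $w = s_{\b_1}$ (resp.\ $s_{\b_1}s_{\b_2}$) is an involution with $\ell(w)+\rk(1-w)$ matching $\dim \O$ (namely $22$, resp.\ $32$), so Theorem \ref{metodo} gives that these classes are spherical and satisfy $(*)$. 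This step works identically for $p=2$ and $p=3$.

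The last case, $3A_1$, is the main obstacle, because the naive product $n_{\b_1}n_{\b_2}n_{\b_3}$ need not land in the right class and one must control which unipotent class the constructed element falls into. Here I would use the element $x = n_{\b_1}n_{\b_2}n_{\b_3}n_{\a_4}$, which lies in $Bn_{\b_1}n_{\b_2}n_{\b_3}n_{\a_4}B = Bw_0B$ (the four roots being pairwise orthogonal, $w=w_0$). One verifies that $x$ is an involution. Then Theorem \ref{metodo} forces $\dim \O_x \geq \ell(w_0)+\rk(1-w_0) = 40$; consulting Table \ref{tavolauno}, the only non-trivial unipotent class of dimension $\geq 40$ other than $A_2$ is $3A_1$ (dimension $40$), and $A_2$ is excluded because its elements are not involutions while $x$ is. Hence $x \in 3A_1$, and Theorem \ref{metodo} then says $3A_1$ is spherical and satisfies $(*)$ with $w(\O)=w_0$. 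Alternatively, for $p=3$ one can invoke the known behaviour of the maximal spherical unipotent class of $D_4$ (from \S 3.1.3, or from \cite{Giovanna-good}, Theorem 3.4) and run the $D_4$-reduction argument of \cite{CCC}, Theorem 3.4; the harder bookkeeping there is checking that the required $D_4$-facts hold in characteristic $2$ and $3$, which is why the direct $w_0$-argument is cleaner.

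Finally, I would note that when $\ch k = 2$ all three classes $A_1, 2A_1, 3A_1$ consist of involutions (the representatives above are visibly involutions, and conversely every involution class appears in Table \ref{tavolauno} by \cite{AS}), which gives the last sentence of the statement. Assembling these three steps with the $\dim \O \leq \dim B$ restriction and Lemma \ref{nonsferiche} completes the proof.
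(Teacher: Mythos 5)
Your argument coincides with the paper's for $p=2$, but it has a genuine gap at $p=3$. The direct ``$w_0$-argument'' for $3A_1$ --- taking $x=n_{\b_1}n_{\b_2}n_{\b_3}n_{\a_4}$, checking it is an involution, and locating it via Theorem \ref{metodo} and Table \ref{tavolauno} --- only makes sense in characteristic $2$. There $n_\gamma^2=h_\gamma(-1)=1$, so each $n_\gamma$ is a unipotent involution and so is a product of commuting ones. In characteristic $3$ one has $h_\gamma(-1)\neq 1$ in general, so $n_\gamma$ has a non-trivial semisimple part; even if $x^2=1$ the element $x$ would then be a \emph{semisimple} involution (since $2\neq p$), hence it cannot lie in the unipotent class $3A_1$ at all. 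The same defect affects your parenthetical claims $x_{-\b_1}(1)\sim n_{\b_1}$ and $x_{-\b_1}(1)x_{-\b_2}(1)\sim n_{\b_1}n_{\b_2}$: these conjugacies are instances of (\ref{p=2}) and hold only for $p=2$ (though they are not actually needed for $A_1$ and $2A_1$, because $x_{-\b_1}(1)\in Bs_{\b_1}B$ already follows from the rank-one Bruhat decomposition in any characteristic, so those two cases are indeed uniform in $p$).

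The paper is careful to present the $n$-element computation only after the reduction ``so now assume $p=2$''; for $p=3$ it handles $A_1$, $2A_1$ and $3A_1$ by the arguments of \cite{CCC}, Theorem 13, the point for $3A_1$ being that the maximal spherical unipotent class of $D_4$ satisfies $(*)$ when $p=3$ by \cite{Giovanna-good}, Theorem 3.4. You do mention this $D_4$-reduction, but only as an optional ``alternative'' while declaring the $w_0$-argument ``cleaner''; in fact for $p=3$ the reduction (or some other genuinely unipotent representative) is required, and your proof is incomplete until it is made the primary argument in that characteristic. A smaller imprecision: from $\dim\O_x\geq 40$ you conclude $x\in 3A_1$ by ``consulting Table \ref{tavolauno}'', but that table lists only the classes with $\dim\O_u\leq\dim B$; the step is legitimate only because $x$ is a unipotent involution and, for $p=2$, every class of involutions occurs in the table by \cite{AS} --- a fact you state later and should invoke at this point. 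The remaining ingredients (restriction to Table \ref{tavolauno}, exclusion of $A_2$ by Lemma \ref{nonsferiche}, the final statement about involutions when $\ch k=2$) match the paper.
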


\subsubsection{Type $E_7$.}

 We put
$$
\begin{array}{lll}
\b_1 = (2,2,3,4,3,2,1),\
\b_2 =(0,1,1,2,2,2,1),\
\b_3 =(0,1,1,2,1,0,0),\\
\b_4 = \a_7,\quad
\b_5=\a_5,\quad
\b_6=\a_3,\quad
\b_7=\a_2
\end{array}
$$
For groups of type $E_7$ we have to consider $p=2$, $3$. If $p=3$, then we may apply the arguments in \cite{CCC}, Theorem 13 to prove that the orbits of type $A_1$, $2A_1$, $(3A_1)'$, $(3A_1)''$ and $4A_1$ are spherical, since we need results for $D_n$ which holds due to
\cite{Giovanna-good}, Theorem 3.4 and its proof. So now assume $p=2$. Then again we may use the proof of \cite{CCC}, Theorem 3.4 to deal with $A_1$, $2A_1$ and  $(3A_1)''$. Note that in these cases
$$
x_{-\beta_1}(1)\in A_1\cap Bs_{\beta_1} B\cap U^-\quad,
$$
$$
x_{-\beta_1}(1)x_{-\beta_2}(1)\in 2A_1\cap Bs_{\beta_1}s_{\beta_2}B\cap U^-\quad,
$$
$$
x_{-\beta_1}(1)x_{-\beta_2}(1)x_{-\a_7}(1)\in (3A_1)''\cap Bs_{\beta_1}s_{\beta_2}s_{\a_7}B\cap U^-
$$
To deal with $(3A_1)'$ and $4A_1$,  again we may apply the arguments in \cite{CCC}, Theorem 13, since we need results for $D_n$ when $p=2$ which we proved in \S 3.1.3. However, it is also possible to show directly that 
$n_{\beta_1}n_{\beta_2}n_{\beta_3}n_{\a_3}\in s_{\beta_1}s_{\beta_2}s_{\beta_3}s_{\a_3}B\cap (3A_1)'$.
To deal with $4A_1$ one can observe that $x=n_{\beta_1}\cdots n_{\b_7}$ is an involution in $Bw_0B$. Then $\dim \O_x\geq \ell(w_0)+\rk(1-w_0)=70$ by Theorem \ref{metodo}, so that $x\in 4A_1$ by Table \ref{tavolauno}. We have proved

\begin{proposition}\label{unip_E_7} Let $\O$ be a non-trivial unipotent conjugacy class in $E_7$. Then $\O$ is spherical if and only if $\O=A_1, 2A_1$,  $(3A_1)'$, $(3A_1)''$ or $4A_1$. 
In each case $\O$ satisfies $(*)$. If $\ch k=2$, these are precisely the classes consisting of involutions.
\end{proposition}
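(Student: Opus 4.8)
The plan is to narrow the list of candidates and then check sphericity case by case, mirroring the argument for $E_6$ in Proposition~\ref{unip_E_6}. Since a Borel subgroup cannot have a dense orbit on an orbit of dimension exceeding $\dim B=70$, every spherical unipotent class of $E_7$ must occur in Table~\ref{tavolauno}, so it is one of $A_1$, $2A_1$, $(3A_1)''$, $(3A_1)'$, $A_2$, $4A_1$. Lemma~\ref{nonsferiche} rules out $A_2$ in every characteristic. Hence it remains to show that the other five classes are spherical, and — since the bad primes for $E_7$ are $2$ and $3$ — it suffices to treat $p=3$ and $p=2$. In each case the strategy is to exhibit an element $x\in\O\cap BwB$ for a suitable involution $w$ with $\dim\O=\ell(w)+\rk(1-w)$, so that Theorem~\ref{metodo} gives property $(*)$ (and hence sphericity).

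For $p=3$ I would invoke the argument of \cite{CCC}, Theorem~13, which produces, for each of $A_1$, $2A_1$, $(3A_1)'$, $(3A_1)''$, $4A_1$, an element $x$ lying in $\O\cap BwB$ with $w=w({\cal C})$ the involution attached to a spherical class ${\cal C}$ of the complex group of the same dimension; Theorem~\ref{metodo} then yields $(*)$. The only external input is that the maximal spherical unipotent class of the relevant $D_n$ subsystem subgroups satisfies $(*)$ for $p=3$, which is provided by \cite{Giovanna-good}, Theorem~3.4 and its proof.

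For $p=2$ the classes $A_1$, $2A_1$ and $(3A_1)''$ are handled by the construction in the proof of \cite{CCC}, Theorem~3.4 together with equation~(\ref{p=2}): with $\beta_1,\beta_2$ as defined above one checks directly that $x_{-\beta_1}(1)\in A_1\cap Bs_{\beta_1}B\cap U^-$, $x_{-\beta_1}(1)x_{-\beta_2}(1)\in 2A_1\cap Bs_{\beta_1}s_{\beta_2}B\cap U^-$ and $x_{-\beta_1}(1)x_{-\beta_2}(1)x_{-\a_7}(1)\in(3A_1)''\cap Bs_{\beta_1}s_{\beta_2}s_{\a_7}B\cap U^-$; via (\ref{p=2}) these are conjugate to $n_{\beta_1}$, $n_{\beta_1}n_{\beta_2}$, $n_{\beta_1}n_{\beta_2}n_{\a_7}$ respectively, and since $\dim\O=\ell(w)+\rk(1-w)$ for the relevant $w$, Theorem~\ref{metodo} gives $(*)$. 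The remaining classes $(3A_1)'$ and $4A_1$ can again be obtained from \cite{CCC}, Theorem~13 using the $D_n$ results of \S3.1.3 (now available for $p=2$); alternatively one proceeds directly, showing $n_{\beta_1}n_{\beta_2}n_{\beta_3}n_{\a_3}\in s_{\beta_1}s_{\beta_2}s_{\beta_3}s_{\a_3}B\cap(3A_1)'$, while $x=n_{\beta_1}\cdots n_{\b_7}$ is an involution of $Bw_0B$, whence by Theorem~\ref{metodo} $\dim\O_x\geq\ell(w_0)+\rk(1-w_0)=70$, forcing $x\in 4A_1$, the unique class of that dimension in Table~\ref{tavolauno} (elements of $A_2$ having order $>2$).

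Finally, for the last assertion: when $p=2$ all classes of unipotent involutions of $E_7$ occur in Table~\ref{tavolauno} by \cite{AS}, and among $A_1$, $2A_1$, $(3A_1)'$, $(3A_1)''$, $4A_1$, $A_2$ only $A_2$ fails to consist of involutions; hence the spherical non-trivial unipotent classes coincide with the involution classes. The step I expect to be the main obstacle is the bookkeeping needed to verify that the explicitly written elements land in the correct class — in particular distinguishing $(3A_1)'$ from $(3A_1)''$ — and checking that the chosen positive roots $\beta_i$ satisfy the commutation hypotheses of Lemma~\ref{scambio} and~(\ref{p=2}), so that the passage from the $x_{-\beta_i}(1)$-products to the $n_{\beta_i}$-products is legitimate; the sphericity itself is then immediate from Theorem~\ref{metodo}.
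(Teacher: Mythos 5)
Your proposal follows the paper's own proof essentially verbatim: the same reduction to the candidates of Table \ref{tavolauno}, the exclusion of $A_2$ by Lemma \ref{nonsferiche}, the appeal to the arguments of \cite{CCC} (Theorem 13 for $p=3$ and for $(3A_1)'$, $4A_1$ when $p=2$, Theorem 3.4 for $A_1$, $2A_1$, $(3A_1)''$), the same explicit representatives $x_{-\beta_i}(1)$ and $n_{\beta_i}$, and the same dimension count via Theorem \ref{metodo} to place $n_{\beta_1}\cdots n_{\beta_7}$ in $4A_1$. It is correct and takes the same route as the paper.
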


\subsubsection{Type $E_8$.}

We put
$$
\begin{array}{lll}
&\b_1 =   (2,3,4,6,5,4,3,2),\ 
\b_2=(2,2,3,4,3,2,1,0),\
\b_3=(0,1,1,2,2,2,1,0),\\
&\b_4=(0,1,1,2,1,0,0,0),\
\b_5=\a_7, \
\b_6=\a_5,\
\b_7=\a_3,\
\b_8=\a_2.
\end{array}
$$
For groups of type $E_8$ we have to consider $p=2$, $3$, $5$. If $p=3$ or $5$, then we may apply the arguments in \cite{CCC}, Theorem 13 to prove that the orbits of type $A_1$, $2A_1$, $3A_1$ and $4A_1$ are spherical, since to handle $3A_1$ and $4A_1$ we need results for $D_4$ and $D_6$ which holds due to
\cite{Giovanna-good}, Theorem 3.4 and its proof. So now assume $p=2$. Then again we may use the proof of \cite{CCC}, Theorem 3.4 to deal with $A_1$ and $2A_1$. Note that in these cases
$$
x_{-\beta_1}(1)\in A_1\cap Bs_{\beta_1} B\cap U^-\quad,
$$
$$
x_{-\beta_1}(1)x_{-\beta_2}(1)\in 2A_1\cap Bs_{\beta_1}s_{\beta_2}B\cap U^-\quad,
$$
To deal with $3A_1$ and $4A_1$,  again we may apply the arguments in \cite{CCC}, Theorem 13, since we need results for $D_n$ when $p=2$ which we proved in \S 3.1.3. However, it is also possible to show directly that 
$n_{\beta_1}n_{\beta_2}n_{\beta_3}n_{\b_5}\in s_{\beta_1}s_{\beta_2}s_{\beta_3}s_{\b_5}B\cap 3A_1$.
To deal with $4A_1$ one can observe that $x=n_{\beta_1}\cdots n_{\b_8}$ is an involution in $Bw_0B$. Then $\dim \O_x\geq \ell(w_0)+\rk(1-w_0)=128$ by Theorem \ref{metodo}, so that $x\in 4A_1$ by Table \ref{tavolauno}. We have proved

\begin{proposition}\label{unip_E_8} Let $\O$ be a non-trivial unipotent conjugacy class in $E_8$. Then $\O$ is spherical if and only if $\O=A_1, 2A_1$,  $3A_1$, or $4A_1$. 
In each case $\O$ satisfies $(*)$. If $\ch k=2$, these are precisely the classes consisting of involutions.
\end{proposition}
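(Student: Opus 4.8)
The plan is to follow the template already used for $E_6$ and $E_7$ (Propositions~\ref{unip_E_6}, \ref{unip_E_7}). First I would observe that a spherical conjugacy class $\O$ satisfies $\dim\O\leq\dim B$, so by Table~\ref{tavolauno} the only candidates for being spherical are $A_1$, $2A_1$, $3A_1$, $A_2$ and $4A_1$. Lemma~\ref{nonsferiche} already removes $A_2$ in every characteristic. Hence it suffices to show that $A_1$, $2A_1$, $3A_1$ and $4A_1$ all satisfy $(*)$ — which by Theorem~\ref{metodo} makes them spherical — and then, in characteristic $2$, to identify this list with the classes of involutions.

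For the positive direction I would argue according to the bad prime. If $p=3$ or $p=5$ I would apply the argument of \cite{CCC}, Theorem~13 unchanged; its only external ingredient is that the maximal spherical unipotent classes of $D_4$ and $D_6$ satisfy $(*)$ for these primes, and this holds by \cite{Giovanna-good}, Theorem~3.4 and its proof, since $3,5$ are good for $D_n$. If $p=2$ I would treat $A_1$ and $2A_1$ exactly as in the proof of \cite{CCC}, Theorem~3.4, using that $x_{-\beta_1}(1)\in A_1\cap Bs_{\beta_1}B\cap U^-$ and $x_{-\beta_1}(1)x_{-\beta_2}(1)\in 2A_1\cap Bs_{\beta_1}s_{\beta_2}B\cap U^-$, with $x_{-\beta_1}(1)\sim n_{\beta_1}$ and $x_{-\beta_1}(1)x_{-\beta_2}(1)\sim n_{\beta_1}n_{\beta_2}$ by (\ref{p=2}). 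For $3A_1$ and $4A_1$ I would again run the argument of \cite{CCC}, Theorem~13, now supplying the type $D_n$ results established in \S3.1.3 for $p=2$; alternatively and more directly, I would verify that $n_{\beta_1}n_{\beta_2}n_{\beta_3}n_{\b_5}\in s_{\beta_1}s_{\beta_2}s_{\beta_3}s_{\b_5}B\cap 3A_1$ and that $x=n_{\beta_1}\cdots n_{\b_8}$ is an involution lying in $Bw_0B$ — here $\b_1,\dots,\b_8$ are pairwise orthogonal (hence $s_{\b_1}\cdots s_{\b_8}=w_0=-1$) and $x$ is an involution by (\ref{p=2}). Then $\dim\O_x\geq\ell(w_0)+\rk(1-w_0)=120+8=128$ by Theorem~\ref{metodo}, and since $\O_x$ consists of involutions, Table~\ref{tavolauno} and \cite{AS} force $\O_x=4A_1$ with $\dim 4A_1=128$. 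In all four cases the equality $\dim\O=\ell(w)+\rk(1-w)$ for the exhibited $w$, together with Theorem~\ref{metodo}, gives that $\O$ is spherical, satisfies $(*)$, and that the displayed element lies in the dense $B$-orbit.

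For the characteristic-$2$ assertion I would use that by \cite{AS} every class of involutions of $E_8$ appears in Table~\ref{tavolauno}, hence lies in $\{A_1,2A_1,3A_1,A_2,4A_1\}$; a representative of $A_2$ has a Jordan block of size $3$ and so has order $4$, whence the classes of involutions are exactly $A_1$, $2A_1$, $3A_1$, $4A_1$, which is precisely the list of spherical classes found above.

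The main obstacle, as in the $E_6$ and $E_7$ cases, is the explicit bookkeeping: choosing $\b_1,\dots,\b_8$ so that $\beta_1$ is the highest root, the successive partial products of root elements land in the named classes, and the full set is pairwise orthogonal; and then certifying the inclusions $n_{\beta_1}n_{\beta_2}n_{\beta_3}n_{\b_5}\in 3A_1$ and $n_{\beta_1}\cdots n_{\b_8}\in 4A_1$. The dimension-and-involution argument through Theorem~\ref{metodo} and Table~\ref{tavolauno} makes the latter identification clean, but checking that the partial products of the $n_{\b_i}$ are conjugate (via (\ref{p=2})) to the corresponding products of negative root elements in the asserted classes is the computational core.
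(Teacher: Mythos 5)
Your proposal is correct and follows essentially the same route as the paper: candidates are read off from Table \ref{tavolauno}, $A_2$ is excluded by Lemma \ref{nonsferiche}, the cases $p=3,5$ are handled via \cite{CCC}, Theorem 13 together with the $D_4$, $D_6$ input from \cite{Giovanna-good}, and for $p=2$ the classes $A_1$, $2A_1$ get the explicit representatives $x_{-\beta_1}(1)$, $x_{-\beta_1}(1)x_{-\beta_2}(1)$ while $3A_1$ and $4A_1$ are settled either by the \S 3.1.3 results for $D_n$ or directly by the elements $n_{\beta_1}n_{\beta_2}n_{\beta_3}n_{\beta_5}$ and $n_{\beta_1}\cdots n_{\beta_8}$ with the dimension bound $\ell(w_0)+\rk(1-w_0)=128$ from Theorem \ref{metodo}. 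The identification of the spherical classes with the involution classes in characteristic $2$ via \cite{AS} and Table \ref{tavolauno} is also exactly the paper's argument.
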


\subsubsection{Type $F_4$.}

We put
$$
\begin{array}{ll}
\b_1 =(2,3,4,2),&\b_2=(0,1,2,2),\\
\b_3=(0,1,2,0),&\b_4=(0,1,0,0)
\end{array}
$$
also $\gamma_1$ is the highest short root $(1,2,3,2)$.

For groups of type $F_4$ we have to consider $p=2$, $3$. If $p=3$, then we may apply the arguments in \cite{CCC}, Theorem 13 to prove that the orbits of type $A_1$, $\tilde A_1$, $A_1\tilde A_1$ and are spherical, since to handle $A_1\tilde A_1$ we need results for $D_4$  which holds due to
\cite{Giovanna-good}, Theorem 3.4 and its proof. So now assume $p=2$. Here there are more conjugacy classes $\O_u$ (due to the presence of the graph automorphism of $G$)   for which $\dim \O_u\leq \dim B$ (see Table \ref{tavolauno}). Each class consists of involutions. We may take the following representatives
$$
x_{-\beta_1}(1)\in A_1\cap Bs_{\beta_1} B\cap U^-\quad,
$$
$$
x_{-\gamma_1}(1)\in \tilde A_1\cap Bs_{\gamma_1}B\cap U^-\quad,
$$
To deal with $\tilde A_1^{(2)}$, we observe that $u=x_{\beta_1}(1)x_{\gamma_1}(1)\in \tilde A_1^{(2)}$ by \cite{AS}, (13.1).
Let $K$ be the $C_2$-subgroup of $G$ with basis $\{(1,1,1,0), \b_2\}$.
Then $\b_1$ and $\gamma_1$ are the highest long and short root in $K$ respectively. A direct calculation in $C_2$ shows that $u$ is conjugate to $v=x_{\beta_1}(1)x_{\b_2}(1)$, hence
$$
x_{-\beta_1}(1)x_{-\beta_2}(1)\in \tilde A_1^{(2)}\cap Bs_{\beta_1}s_{\beta_2}B\cap U^-
$$
Finally, to deal with  $A_1\tilde A_1$ we observe that $x=n_{\beta_1}\cdots n_{\b_4}$ is an involution in $Bw_0B$. Then $\dim \O_x\geq \ell(w_0)+\rk(1-w_0)=28$ by Theorem \ref{metodo}, so that $x\in A_1\tilde A_1$ by Table \ref{tavolauno}.

\begin{center}
\vskip-20pt
$$
\begin{array}{|c||c|c|c|}
\hline
\O  &J& w(\O) &\dim \O \\ 
\hline
\hline 
\begin{array}{c}
A_1
\\
\end{array} &\displaystyle  \{2,3,4\} & s_{\b_1}&16\\
\hline
\begin{array}{c}
\tilde A_1
\\
\end{array}
&  \displaystyle \{2,3\}& \quad\displaystyle s_{\beta_1}s_{\beta_2}\quad& 22
\\
\hline
\begin{array}{c}
A_1\tilde A_1
\end{array}
&  \displaystyle \emptyset& \quad\displaystyle w_0\quad&28\\
\hline
\end{array}
$$
\end{center}
\begin{center} Table \totable:  $F_4$, $p=3$ (or any $\ch k\not=2$).
\end{center}
\medskip

\begin{center}
\vskip-20pt
$$
\begin{array}{|c||c|c|c|c|}
\hline
\O  &J& w(\O) & x\in \O\cap Bw(\O)B&\dim \O \\ 
\hline
\hline 
\begin{array}{c}
A_1
\\
\end{array} &\displaystyle  \{2,3,4\} & s_{\b_1}&n_{\b_1}&16\\
\hline
\begin{array}{c}
\tilde A_1
\\
\end{array}
&  \displaystyle \{1,2,3\}& \quad\displaystyle s_{\gamma_1}\quad& n_{\gamma_1}&16
\\
\hline
\begin{array}{c}
\tilde A_1^{(2)}
\end{array}
&  \displaystyle  \{2,3\}& \quad\displaystyle s_{\beta_1}s_{\beta_2}\quad& n_{\beta_1}n_{\beta_2}&22\\
\hline
\begin{array}{c}
A_1\tilde A_1
\end{array}
&  \displaystyle \emptyset& \quad\displaystyle w_0\quad& n_{\beta_1}n_{\beta_2}n_{\beta_3}n_{\beta_4}&28\\
\hline
\end{array}
$$
\end{center}
\begin{center} Table \totable:  $F_4$, $\ch k=2$.
\end{center}
\medskip

\begin{proposition}\label{unip_F_4} Let $\O$ be a non-trivial unipotent conjugacy class in $F_4$. Then $\O$ is spherical if and only if it is listed in Table \ref{tavolauno}.
In each case $\O$ satisfies $(*)$. If $\ch k=2$, these are precisely the classes consisting of involutions.
\end{proposition}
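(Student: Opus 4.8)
The plan is to follow exactly the template already established for the other exceptional groups, specializing to $F_4$ and splitting on the characteristic. For the two cases $p\neq 2$ (in practice $p=3$, since good characteristic is handled in \cite{Giovanna-good}) and $p=2$ we must establish two things: (i) each class listed in Table 7 (equivalently Table \ref{tavolauno} for $F_4$) is spherical, and (ii) no class outside that list is spherical. For (i), the strategy is to produce, for each candidate class $\O$, an element $x\in\O\cap Bw(\O)B$ with $\dim\O=\ell(w(\O))+\rk(1-w(\O))$, so that Theorem \ref{metodo} applies and gives sphericity. The elements and Weyl group elements $w(\O)=w_0\wJ$ are precisely those displayed in the tables preceding the statement, so this part is essentially bookkeeping once the representatives are verified.

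The verification of the representatives is where the actual work lies, and it is done root-subgroup by root-subgroup. For $A_1$ one takes $x_{-\beta_1}(1)$, noting $x_{-\beta_1}(1)\sim n_{\beta_1}$ via (\ref{relazioni}); since $\beta_1$ is the highest root, $s_{\beta_1}=w_0\wJ$ for $J=\{2,3,4\}$ and $\dim A_1=16=\ell(s_{\beta_1})+\rk(1-s_{\beta_1})$. For $\tilde A_1$ when $p=2$ one uses $x_{-\gamma_1}(1)$ with $\gamma_1$ the highest short root; for $p\neq 2$ the short root class $\tilde A_1$ does not have the same dimension and one instead lands in the class of dimension $22$ via $s_{\beta_1}s_{\beta_2}$, which accounts for the difference between the two tables. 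The genuinely new case is $\tilde A_1^{(2)}$ in characteristic $2$: here I would first invoke \cite{AS}, (13.1) to identify $x_{\beta_1}(1)x_{\gamma_1}(1)$ with the class $\tilde A_1^{(2)}$, then pass to the $C_2$-subgroup $K$ with simple roots $\{(1,1,1,0),\beta_2\}$ — in which $\beta_1,\gamma_1$ are the highest long and short roots — and carry out a direct $Sp_4$ computation showing $x_{\beta_1}(1)x_{\gamma_1}(1)$ is conjugate in $K$ to $x_{\beta_1}(1)x_{\beta_2}(1)$; since $\beta_1,\beta_2$ are orthogonal this element lies in $Bs_{\beta_1}s_{\beta_2}B\cap U^-$ and by (\ref{p=2}) one may replace it by $n_{\beta_1}n_{\beta_2}$. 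Finally $A_1\tilde A_1$ is the regular-enough case: $x=n_{\beta_1}n_{\beta_2}n_{\beta_3}n_{\beta_4}$ is checked to be an involution lying in $Bn_{\beta_1}n_{\beta_2}n_{\beta_3}n_{\beta_4}B=Bw_0B$ (the $\beta_i$ being chosen pairwise orthogonal), so Theorem \ref{metodo} gives $\dim\O_x\geq\ell(w_0)+\rk(1-w_0)=28$, forcing $x\in A_1\tilde A_1$ by Table \ref{tavolauno} (the only other class of dimension $\leq 28$ would have too small a dimension, and $A_2$-type classes are excluded when $p=2$ since their elements are not involutions).

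For (ii), the non-candidate classes, the point is that Table \ref{tavolauno} already lists every unipotent class of $F_4$ with $\dim\O_u\leq\dim B=28$, and by Theorem \ref{metodo} a spherical class $\O$ satisfies $\dim\O=\ell(w)+\rk(1-w)\leq 2\ell(w_0)\leq\ldots$; more directly, any spherical class has $\dim\O\leq\dim B$ in the relevant bound, so any unipotent class not appearing in Table \ref{tavolauno} is automatically non-spherical. When $p=2$ one uses in addition that $w(\O)^2=1$ for every class (\cite{intersezioni}) together with the fact that a spherical unipotent class in characteristic $2$ consists of involutions — so the classes in Table \ref{tavolauno} that do consist of involutions (by \cite{AS}, all involution classes appear there) are exactly the spherical ones, and the remark about $A_2$-type classes being non-spherical in any characteristic (Lemma \ref{nonsferiche}) is not needed here since $F_4$ has no $A_2$ class in the table — the obstruction to watch is simply making sure no class in Table \ref{tavolauno} is wrongly included, which is settled by the explicit $(*)$-certificates in case (i).

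The main obstacle I expect is the $\tilde A_1^{(2)}$ computation in $C_2$: one must correctly set up the embedding of the $C_2$-subgroup, track the structure constants (in characteristic $2$) for the commutator $[x_{\beta_1}(1),x_{\gamma_1}(1)]$ inside $K$, and exhibit the conjugating element explicitly; this is the only place where a genuine calculation rather than a table lookup or an application of Theorem \ref{metodo} is required, and getting the signs and the characteristic-$2$ simplifications right is delicate. Everything else — the identification $w(\O)=w_0\wJ$, the dimension equalities, and the conclusion via Theorem \ref{metodo} and (\ref{p=2}) — is routine and parallels the arguments already given for $E_6$, $E_7$, $E_8$.
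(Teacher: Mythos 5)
Your proposal matches the paper's argument essentially step for step: the same representatives $x_{-\beta_1}(1)$ and $x_{-\gamma_1}(1)$, the same reduction of $\tilde A_1^{(2)}$ to a direct $C_2$-computation via \cite{AS}, (13.1), the same device of showing $n_{\beta_1}n_{\beta_2}n_{\beta_3}n_{\beta_4}$ is an involution in $Bw_0B$ and forcing it into $A_1\tilde A_1$ by the dimension bound of Theorem \ref{metodo}, and the same observation that any class outside Table \ref{tavolauno} has dimension exceeding $\dim B$ and so cannot be spherical. The one place you are lighter than the paper is $p=3$: there the representatives cannot be produced as involutions (in odd characteristic $n_{\beta_1}\cdots n_{\beta_4}$ is semisimple, not unipotent), and the paper instead invokes the arguments of \cite{CCC}, Theorem 13 together with the $D_4$ input from \cite{Giovanna-good} — which is what your appeal to ``the template established for the other exceptional groups'' implicitly covers, so this is an imprecision rather than a gap.
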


We note that in $G_\C$ there is an involution $\s$ such that $C(\s)$ is of type $B_4$ and such that $\O_\s$ lies over $s_{\gamma_1}$. We also observe that if $p=2$, then $x_{\a_4}(1)\in A_1$,
$x_{\a_1}(1)\in \tilde A_1$, $x_{\a_1}(1)x_{\a_3}(1)\in A_1\tilde A_1$ and these exhaust the conjugacy classes of involutions with representative of the form $\prod_{i\in K}x_{\a_i}(1)$, $K\subseteq\Pi$. In particular $\tilde A_1^{(2)}$ has no representative of the form $\prod_{i\in K}x_{\a_i}(1)$.

\subsubsection{Type $G_2$.}
We put 
$
\b_1 =(3,2),\
\b_2=\a_1 
$,
$\gamma_1=(2,1)$ (the highest short root).

For groups of type $G_2$ we have to consider $p=2$, $3$. The p.o. set of unipotent conjugacy classes is described in the tables in \cite{spal}, Proposition II 10.4.

If $p=2$, the classification of unipotent conjugacy classes $\O$ for which $\dim \O\leq \dim B$ is the same as over $\C$ and each class consists of involutions. We may take
$$
x_{-\beta_1}(1)\in A_1\cap Bs_{\beta_1} B\cap U^-\quad,
$$
To deal with  $\tilde A_1$ we observe that $x=n_{\beta_1} n_{\b_2}$ is an involution in $Bw_0B$. Then $\dim \O_x\geq \ell(w_0)+\rk(1-w_0)=8$ by Theorem \ref{metodo}, so that $x\in\tilde A_1$ by Table \ref{tavolauno}.

\begin{center}
\vskip-20pt
$$
\begin{array}{|c||c|c|c|c|}
\hline
\O  &J& w(\O) & x\in \O\cap  Bw(\O)B&\dim \O \\ 
\hline
\hline 
\begin{array}{c}
A_1
\end{array} &\displaystyle \{1\} & s_{\beta_1}&n_{\b_1}&6\\
\hline
\begin{array}{c}
\tilde A_1
\end{array}
&  \emptyset& \quad\displaystyle w_0=s_{\b_1} s_{\b_2}\quad& n_{\b_1}n_{\b_2}&8
\\
\hline
\end{array}
$$
\end{center}
\begin{center} Table \totable:  $G_2$,
$p= 2$.
\end{center}

So now assume $p=3$. Here there are more conjugacy classes   $\O$ for which $\dim \O_u\leq \dim B$ (see Table \ref{tavolauno}), due to the presence of the graph automorphism of $G$. We may take the following representatives
$$
x_{-\beta_1}(1)\in A_1\cap Bs_{\beta_1} B\cap U^-\quad,
$$
$$
x_{-\gamma_1}(1)\in \tilde A_1\cap Bs_{\gamma_1}B\cap U^-\quad,
$$
To deal with $\tilde A_1^{(3)}$, we observe that since $A_1\leq \tilde A_1^{(3)}$ and $\tilde A_1\leq \tilde A_1^{(3)}$, we get $s_{\b_1}\leq w(\tilde A_1^{(3)})$ and $s_{\gamma_1}\leq w(\tilde A_1^{(3)})$, so that $w(\tilde A_1^{(3)})=w_0$, and we are done since $\dim \tilde A_1^{(3)}=8$.

\begin{center}
\vskip-20pt
$$
\begin{array}{|c||c|c|c|c|}
\hline
\O  &J& w(\O) &\dim \O \\ 
\hline
\hline 
\begin{array}{c}
A_1
\end{array} &\displaystyle \{1\} & s_{\b_1}&6\\
\hline

\begin{array}{c}
\tilde A_1
\\
\end{array}
&  \displaystyle \{2\}& \quad\displaystyle s_{\gamma_1}\quad&6
\\
\hline
\begin{array}{c}
\tilde A_1^{(3)}
\end{array}
&  \emptyset & \quad\displaystyle s_{\beta_1}s_{\beta_2}\quad& 8
\\
\hline
\end{array}
$$
\end{center}
\begin{center} Table \totable: $G_2$, $p=3$.
\end{center}
\medskip

\begin{proposition}\label{unip_G_2} Let $\O$ be a non-trivial unipotent conjugacy class in $G_2$. Then $\O$ is spherical if and only if it is listed in Table \ref{tavolauno}.
In each case $\O$ satisfies $(*)$. If $\ch k=2$, these are precisely the classes consisting of involutions.\cvd
\end{proposition}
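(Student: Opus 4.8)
The plan is to assemble the case analysis carried out above for $G_2$ into the three assertions, treating the two bad primes $p=2$ and $p=3$; throughout, $\dim B=8$. I would first settle the ``only if'' direction: a spherical class has a dense $B$-orbit, hence $\dim\O\le\dim B=8$, and since the $G_2$-rows of Table \ref{tavolauno} list (by \cite{LLS}, Table 2) exactly the unipotent classes $\O_u$ with $\dim\O_u\le\dim B$, every unipotent class not appearing there has $\dim\O>8$ and is not spherical. So the remaining task is to verify $(*)$ for each class in Table \ref{tavolauno} --- which makes it spherical, and identifies $w(\O)$, by Theorem \ref{metodo} --- and, for $p=2$, to match that list with the involution classes.

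For the ``if'' direction I would proceed as in the preparatory discussion, characteristic by characteristic. When $p=2$ the classes are $A_1$ ($\dim 6$) and $\tilde A_1$ ($\dim 8$): for $A_1$ one has $x_{-\b_1}(1)\in A_1\cap Bs_{\b_1}B\cap U^-$ with $6=\ell(s_{\b_1})+\rk(1-s_{\b_1})$, so $(*)$ holds; for $\tilde A_1$ one takes $x=n_{\b_1}n_{\b_2}$, which (as $\b_1\perp\b_2$ and $p=2$) is an involution lying in $Bw_0B$ with $w_0=s_{\b_1}s_{\b_2}$, so Theorem \ref{metodo} gives $\dim\O_x\ge\ell(w_0)+\rk(1-w_0)=8$, and since the involution classes are $A_1,\tilde A_1$ of dimensions $6,8$ this forces $\O_x=\tilde A_1$, so $(*)$ holds for $\tilde A_1$. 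When $p=3$ the classes are $A_1$, $\tilde A_1$ (both $\dim 6$) and $\tilde A_1^{(3)}$ ($\dim 8$): here $x_{-\b_1}(1)\in A_1\cap Bs_{\b_1}B$ and $x_{-\gamma_1}(1)\in\tilde A_1\cap Bs_{\gamma_1}B$ give $(*)$ for the first two, while for $\tilde A_1^{(3)}$ I would argue from the closure relations $A_1\le\overline{\tilde A_1^{(3)}}$ and $\tilde A_1\le\overline{\tilde A_1^{(3)}}$, which by (\ref{inclusion}) yield $s_{\b_1}\le w(\tilde A_1^{(3)})$ and $s_{\gamma_1}\le w(\tilde A_1^{(3)})$ in the Chevalley--Bruhat order; the only element above both is $w_0$, so $w(\tilde A_1^{(3)})=w_0$ and $\dim\tilde A_1^{(3)}=8=\ell(w_0)+\rk(1-w_0)$, giving $(*)$.

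Finally, for the statement in characteristic $2$ I would note that the chosen representatives $x_{-\b_1}(1)$ of $A_1$ and $x=n_{\b_1}n_{\b_2}$ of $\tilde A_1$ are involutions when $p=2$, so both classes consist of involutions; conversely every class of involutions has $\dim\O\le\dim B$ by \cite{AS}, hence occurs in Table \ref{tavolauno} and so is $A_1$ or $\tilde A_1$. I do not expect a genuine obstacle here. The one delicate point --- already dealt with above --- is that in two cases (the class $\tilde A_1$ when $p=2$, of dimension $8$, and the extra class $\tilde A_1^{(3)}$ when $p=3$) one cannot write down a Bruhat-cell representative of $\O$ as transparently as for the other classes, and must instead pin the class down indirectly: by the dimension inequality of Theorem \ref{metodo} in the former case, and by the Bruhat-order comparisons coming from (\ref{inclusion}) in the latter.
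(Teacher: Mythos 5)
Your proposal is correct and follows essentially the same route as the paper: explicit representatives $x_{-\b_1}(1)$, $x_{-\gamma_1}(1)$ in Bruhat cells for the small classes, the involution $n_{\b_1}n_{\b_2}\in Bw_0B$ combined with the dimension bound of Theorem \ref{metodo} for $\tilde A_1$ when $p=2$, and the Bruhat-order argument forcing $w(\tilde A_1^{(3)})=w_0$ when $p=3$. The only-if direction via $\dim\O\le\dim B$ and the identification with involution classes via \cite{AS} are likewise exactly the paper's reasoning.
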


Note that if $p=3$, then $x_{\a_2}(1)\in A_1$,
$x_{\a_1}(1)\in \tilde A_1$, while $\tilde A_1^{(3)}$ has no representative of the form $\prod_{i\in K}x_{\a_i}(1)$, $K\subseteq \Pi$.

This completes the classification of spherical unipotent conjugacy classes in bad characteristic.
In particular we have proved that

\begin{theorem}\label{finale} Let $\O$ be a non-trivial unipotent conjugacy class of a simple algebraic group in characteristic 2. Then $\O$ is spherical if and only if it consists of involutions.\cvd
\end{theorem}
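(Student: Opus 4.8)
The plan is to assemble Theorem \ref{finale} from the case-by-case work already carried out in Section \ref{classificazione}, so the ``proof'' is essentially a bookkeeping argument across the Cartan types, using that in characteristic $2$ the only bad primes force us into types $B_n, C_n, D_n$ (always bad), $G_2, F_4, E_6, E_7$ and $E_8$ (bad when $p=2$), while type $A_n$ has no bad primes but was treated anyway for completeness. First I would recall that, since $SO(2n+1)$ and $Sp(2n)$ are isomorphic as abstract groups in characteristic $2$, it suffices to treat $Sp(2n)$ for the types $B_n$ and $C_n$ simultaneously; the classical-group cases are then exactly Propositions \ref{sfericheC_n_completo} and \ref{sfericheD_n_completo}, which assert precisely that a non-trivial unipotent class of $Sp(2n)$ (respectively $SO(2n)$) is spherical if and only if it consists of involutions. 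Type $A_n$ is handled by the computation summarized in Table 1, where it is shown that the spherical classes are exactly the $X_i = 2^i \oplus 1^{n+1-2i}$, i.e.\ precisely the classes of involutions.

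Next I would collect the exceptional cases. For $E_6, E_7, E_8$ with $p=2$, Propositions \ref{unip_E_6}, \ref{unip_E_7}, \ref{unip_E_8} state that the spherical non-trivial unipotent classes are $A_1, 2A_1, 3A_1$ (and, for $E_7$, $(3A_1)'$ and $(3A_1)''$, and the $4A_1$ class for $E_7$ and $E_8$), and each of these propositions explicitly records that when $\ch k = 2$ these are exactly the classes consisting of involutions — the class $A_2$, which is the only other candidate in Table \ref{tavolauno}, is excluded because its elements are not involutions (Lemma \ref{nonsferiche}) and it is not spherical in any case. For $F_4$ and $G_2$ with $p=2$, Propositions \ref{unip_F_4} and \ref{unip_G_2} similarly identify the spherical non-trivial unipotent classes with the involution classes listed in the $p=2$ rows of Table \ref{tavolauno} (for $F_4$ these are $A_1, \tilde A_1, \tilde A_1^{(2)}, A_1\tilde A_1$; for $G_2$ they are $A_1, \tilde A_1$), and each proposition again states that for $\ch k = 2$ these coincide with the involution classes. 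Two facts are used implicitly and should be cited: that Table \ref{tavolauno} (drawn from \cite{LLS}) lists all unipotent classes of dimension at most $\dim B$, hence all candidates for sphericity, and that, by \cite{AS}, when $p=2$ every class of involutions indeed appears in Table \ref{tavolauno} — so no involution class is accidentally omitted from the analysis.

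The argument then closes as follows: in every Cartan type and for $\ch k = 2$, the propositions just cited give a complete list of the spherical non-trivial unipotent classes, and in each case that list is explicitly asserted to equal the set of classes consisting of involutions; conversely, by the remark following Theorem \ref{metodo}, for \emph{any} non-trivial unipotent class $\O$ the element $w(\O)$ is an involution, and for the classes appearing in these lists the representatives exhibited (of the form $n_{\beta_1}\cdots n_{\beta_\ell}$ with the $\beta_i$ pairwise orthogonal, or conjugates thereof) are genuine involutions by \eqref{p=2}. Hence a non-trivial unipotent class is spherical if and only if it consists of involutions, uniformly across all simple algebraic groups in characteristic $2$.

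The only mild obstacle is organizational rather than mathematical: one must be careful that the isogeny $SO(2n+1)\to Sp(2n)$ transports unipotent classes and the sphericity property faithfully (so that the $B_n$ statement is genuinely subsumed by the $Sp(2n)$ statement), and one must confirm — via \cite{AS} — that the list of ``small-dimension'' unipotent classes in Table \ref{tavolauno} really does contain \emph{every} class of involutions in the exceptional groups when $p=2$, since otherwise the equivalence ``spherical $\iff$ involution'' could fail to be exhaustive on the involution side. Once these two points are in place, the theorem is an immediate consequence of the per-type propositions, with no further computation needed.
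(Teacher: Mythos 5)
Your proposal is correct and follows essentially the same route as the paper: Theorem \ref{finale} is obtained there simply by combining the per-type results (Propositions \ref{sfericheC_n_completo}, \ref{sfericheD_n_completo}, \ref{unip_E_6}--\ref{unip_G_2} and the $A_n$ discussion), exactly as you do. The two points you flag for care --- transporting the statement through the isogeny $SO(2n+1)\to Sp(2n)$ and checking via \cite{AS} that every involution class of an exceptional group appears among the small-dimension classes of Table 5 --- are precisely the ones the paper relies on implicitly.
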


This clearly holds for every connected reductive algebraic group in characteristic 2.

\begin{remark}
{\rm 
In each case there exists a unique maximal spherical conjugacy class $\O_{\rm max}$, and $w(\O_{\rm max})=w_0$. The union $\cal U^{\rm sph}$  of all spherical unipotent orbits is in the closure of $\O_{\rm max}$.
}
\end{remark}

\setcounter{equation}{0}
\section{Symmetric homogeneous spaces}\label{simmetrici}

In this section we shall prove that if $G$ is a connected reductive algebraic group over the algebraically close field $k$ of characteristic 2, then $H=C(\s)$ is a spherical subgroup of $G$ for every involutory automorphism $\s$ of $G$. This was proved by Vust in \cite{Vust} in characteristic zero (see also \cite{Matsuki} over $\C$). Then Springer extended the result to odd characteristic in \cite{Springerinv}. In \cite{Seitz}, Seitz gives an alternative proof of Springer's result. 

We shall use a generalization of Theorem \ref{metodo}. Here $G$ is any connected reductive algebraic group over $k$, any characteristic.

Let $\t$ be an automorphism of $G$ fixing $B$ and $T$, and consider $G:\<\t$. Assume $\t$ has order $r$. Then 
we have the Bruhat decomposition
$$
G:\<\t=\bigcup_{w\in W,\  i\in \Z_r}B\t^iwB
$$
Let $\O$ be a $G$-orbit in $G:\<\t$. Then there exists a unique $i\in \Z_r$ such that $\O\subseteq \bigcup_{w\in W}B\t^iwB$, and there is a unique $z=z(\O)$ such that 
$\O\cap B\t^izB$ is open dense in $\O$. In particular 
\begin{equation}
\overline{\O}=\overline {\O\cap
B\t^iz B}\subseteq \overline {B\t^iz B}= \t^i\,\overline {Bz B}.
\label{inclusion}
\end{equation}

It follows that if $y$ is an element of $\overline\O$ and $y\in B\t^i w B$, then
$w\leq z$ in the
Chevalley-Bruhat order of $W$. 
Let us observe that if $\O$ is spherical and
if $B.x$ is the dense $B$-orbit in $\O$, then $B.x\subseteq B\t^i zB$.

We still denote by $\t$ the automorphism of $E=X(T)\otimes \R$ induced by $\t$ (i.e. $\t(\chi)(t)=\chi(\t^{-1}t\t)$ for every $\chi\in X=X(T)$, $t\in T$). For every $w\in W$ we put
$$
T^{\t w}=\{t\in T ~|~ {w}^{-1}\t^{-1}t\t{w}=t\}
$$
We have $\dim T^{\t w}=n-\rk(1-\t w)$.

\begin{theorem}\label{metodoesteso}
Let $\s\in G:\<\t$, $\s=\t^i g$, for a certain $g\in G$, $i\in \Z_r$,  and let  $\O=G.\s$. 
Suppose that $\O$ contains an element $x\in B\t^iwB$, for a certain $w\in W$, where $U_w$ is $\t$-invariant.
Then 
$$
\dim B.x
\geq \ell(w)+\rk(1-\t^iw).
$$ 
In
particular $\dim {\O} \geq \ell(w)+\rk(1-\t^iw)$.
If, in addition, $\dim\O\leq\ell(w)+\rk (1-\t^iw)$ then $\O$ is spherical, 
$w=z(\O)$ and $B.x$ is the dense $B$-orbit in $\O$.\cvd
\end{theorem}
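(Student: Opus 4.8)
The plan is to mimic the proof of Theorem \ref{metodo} as given in \cite{CCC}, Theorem 5, adapting it to the twisted setting. The key input there is a dimension estimate for the $B$-orbit of a point in a Bruhat cell, obtained by exhibiting enough independent tangent directions coming from the action of $U$ and of $T$. I would work inside $G{:}\langle\tau\rangle$ and use the decomposition of $U$ relative to $w$: writing $U = U_w\,(U\cap w^{-1}Uw)$, the factor $U\cap w^{-1}Uw$ contributes $\ell(w)$ directions transverse to $B\tau^i w B$ in the usual way, while $U_w$ stabilizes the cell. The hypothesis that $U_w$ is $\tau$-invariant is exactly what guarantees that conjugating $x = b\tau^i\dot w b'$ by $U_w$ stays inside $B\tau^i w B$, so that the counting argument of the untwisted case goes through verbatim for the unipotent part.

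Next I would handle the torus contribution, which is where the twist changes the count from $\rk(1-w)$ to $\rk(1-\tau^i w)$. Conjugating $x$ by $t\in T$ moves it by the map $t\mapsto t x t^{-1}$; modulo $B$ on the appropriate side, the relevant quotient is $T/T^{\tau^i w}$, and by the identity recorded just before the statement, $\dim T - \dim T^{\tau^i w} = \rk(1-\tau^i w)$. One must check that the $T$-directions and the $U$-directions together span a subspace of the tangent space to $B.x$ of dimension $\ell(w)+\rk(1-\tau^i w)$, i.e. that there is no unexpected overlap; this is the same transversality computation as in \cite{CCC} with $w$ replaced by $\tau^i w$ throughout, and it is clean because $U_w$ being $\tau$-invariant keeps the two groups of directions in complementary positions. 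This gives $\dim B.x \geq \ell(w) + \rk(1-\tau^i w)$, hence $\dim\O \geq \ell(w)+\rk(1-\tau^i w)$.

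For the final assertion, suppose in addition $\dim\O \leq \ell(w)+\rk(1-\tau^i w)$. Then the inequalities force $\dim B.x = \dim\O$, so $B.x$ is dense in $\O$ and $\O$ is spherical. It remains to identify $w$ with $z(\O)$: by \eqref{inclusion} the dense $B$-orbit lies in $B\tau^i z(\O)B$, and since $x\in B\tau^i w B$ lies in the dense $B$-orbit, the cell is uniquely determined, so $w = z(\O)$.

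The main obstacle I expect is the transversality/independence step: verifying that in the twisted Bruhat cell the tangent directions from $U\cap w^{-1}Uw$ and from $T$ (acting by twisted conjugation) are genuinely independent and genuinely tangent to $B.x$, i.e. that conjugation by these subgroups does not escape the cell and does not produce relations. The role of the hypothesis ``$U_w$ is $\tau$-invariant'' is precisely to control this: it ensures $U_w$ normalizes the situation so that the decomposition $U = U_w(U\cap w^{-1}Uw)$ remains compatible with $\tau^i$-twisted conjugation, which is what makes the untwisted argument of \cite{CCC} carry over without modification. Everything else is a bookkeeping translation replacing $w$ by $\tau^i w$ in the rank term.
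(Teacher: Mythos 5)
Your overall architecture is right (lower-bound $\dim B.x$ by a unipotent plus a torus contribution, then conclude density and $w=z(\O)$, which you handle correctly), but the key dimension estimate as you describe it would not work, and the error is concrete. You propose to extract the $\ell(w)$ tangent directions from the factor $U\cap w^{-1}Uw$ in the decomposition $U=U_w\,(U\cap w^{-1}Uw)$, with $U_w$ merely ``stabilizing the cell''. This reverses the roles of the two factors: with the paper's convention $U_w=U\cap w^{-1}U^-w$, it is $U_w$ that has dimension $\ell(w)$, while $U\cap w^{-1}Uw$ has dimension $|\Phi^+|-\ell(w)$. In the most important applications of this theorem one takes $w=w_0$, where $U\cap w_0^{-1}Uw_0$ is trivial and contributes nothing, so a count of $\ell(w_0)=|\Phi^+|$ directions from that factor fails completely. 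Moreover, since $\tau$ fixes $B$, conjugation by any element of $B$ preserves the double coset $B\tau^iwB$, so there are no ``directions transverse to the cell'' to be found among conjugation directions; what must be shown is that the $B$-orbit inside the cell is large.

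The actual proof is a stabilizer computation in $B_w=TU_w$. Writing $x=\tau^i\dot wu$ with $u\in U$, one shows $C_{B_w}(x)\leq T^{\tau^iw}$: for $v\in U_w$, $t\in T$, the relation $vtxt^{-1}v^{-1}=x$ rewrites as $\tau^i(\tau^{-i}v\tau^i)\,\dot w\,(\dot w^{-1}\tau^{-i}t\tau^i\dot w)u=\tau^i\dot w(uvt)$, and the hypothesis that $U_w$ is $\tau$-invariant guarantees $\tau^{-i}v\tau^i\in U_w$, so uniqueness of the decomposition forces $v=1$ and then $t=\dot w^{-1}\tau^{-i}t\tau^i\dot w$, i.e.\ $t\in T^{\tau^iw}$. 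Hence $\dim B_w.x\geq\dim TU_w-\dim T^{\tau^iw}=\ell(w)+n-(n-\rk(1-\tau^iw))=\ell(w)+\rk(1-\tau^iw)$: the $\ell(w)$ comes from $U_w$ itself, and the $\rk(1-\tau^iw)$ from $T/T^{\tau^iw}$ exactly as you anticipated for the torus part. Your identification of where the twist enters the rank term, and your concluding paragraph, are fine; the gap is entirely in where the unipotent $\ell(w)$ comes from, and your proposed source for it would have to be replaced by the stabilizer argument above.
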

\begin{proof}
Without loss of generality,  we may assume $x=\t^i\dot w u$, for a certain representative $\dot w$ of $w$ in $N$ and $u\in U$. Let us estimate the dimension of the orbit $B_w. x$, where $B_w=TU_w$.

Let $vt\in C_{B_w}(x)$, with $v\in U_w$, $t\in T$.  Then
$$
\t^i\dot{w}uvt=vt\t^i\dot{w}u=\t^i\t^{-i}vt\t^i\dot{w}u=\t^i\t^{-i}v\t^i\t^{-i}t\t^i\dot{w}u=\t^i\,\t^{-i}v\t^i\,\dot{w}\,\dot{w}^{-1}\t^{-i}t\t^i\dot{w}u
$$
so that,
by the uniqueness of the decomposition, $v=1$
since $t^{-i}v\t^i\in U_w$. Moreover, from
$
ut=\dot{w}^{-1}\t^{-i}t\t^i\dot{w}u
$
it follows
$
t=\dot{w}^{-1}\t^{-i}t\t^i\dot{w}
$.
Therefore $C_{B_w}(x)\leq T^{\t^iw}$, thus
$\dim C_{B^w}(x)\leq \dim
T^{\t^iw}=n-\rk(1-\t^iw)$ and
$$
\dim B^w.x=\dim B^w-\dim
C_{B^w}(x)\geq
\ell(w)+n-n+\rk(1-\t^iw)=\ell(w)+\rk(1-\t^iw).
$$
If, in addition, $\ell(w)+\rk(1-\t^iw)\geq \dim\O$,
then $\dim\O=\ell(w)+\rk(1-\t^iw)$.
In particular $B.x$ is the dense $B$-orbit in $\O$.\cvd
\end{proof}

We observe that the condition $\t(U_w)=U_w$ is clearly satisfied if $w=s_{r_1}\cdots s_{r_k}$ where $r_1,\ldots,r_k$ are roots fixed by $\t$, or if $\{r_1,\ldots,r_k\}$ is a $\t$ invariant set of pairwise orthogonal roots.

In the remainder of this section, we assume that the characteristic of $k$ is 2.


We start with a general remark. Let $G=S\,G_1\cdots G_t$, where $S$ is the connected component of $Z(G)$, and $G_1,\ldots,G_t$ are the simple components of $G$. Let $\s$ be an involutorial automorphism of $G$. Then $\s$ fixes $S$, and induces a permutation $\r$ of the set $\{1,\ldots,t\}$. If $\r$ is non-trivial, then it is the product of disjoint cycles of length 2. Suppose one of these cycles is $(1,2)$. Then $\s$ induces an isomorphism $\f:G_1\to G_2$. Let $B_1=T_1U_1$ be a Borel subgroup of $G_1$, where $U_1$ is the unipotent radical of $B_1$, and $T_1$ a maximal torus. Let $V_1$ be the maximal unipotent subgroup of the Borel subgroup of $G_1$ opposite to $B_1$, and let $R=B_1V_1^\f\leq G_1G_2$. Then $C_{G_1G_2}(\s)\cap R$ is finite, so that 
$C_{G_1G_2}(\s)$ is a spherical subgroup of $G_1G_2$, since $\dim  C_{G_1G_2}(\s)+\dim B_1V_1^\f=\dim G_1G_2$. Of course $C_S(\s)$ is a spherical subgroup of $S$, so that to conclude it is enough to assume that $G$ is a (connected) simple algebraic group.

So now assume $G$ is a simple algebraic group. In the previous section we have already shown that $C(\s)$ is a spherical sugbroup of $G$ when $\s$ is an inner involution of $G$. We are therefore left to deal with outer involutions, which exists only in the following cases:
$A_\ell$, $\ell\geq 2$, $D_\ell$, $\ell\geq 4$ and $E_6$. 

To prove that the fixed point subgroup of any outer involution of $G$ is spherical, we shall use the classification of outer involutions of $G$ as in \cite{spal} and \cite{AS}. We fix the graph automorphism $\tau$ (of order 2) of $G$, and for each $G$-orbit $\O$ of outer involutions of $G$ we show that there exists $w=s_{\delta_1}\cdots s_{\delta_\ell}\in W$ such that  $\O\cap B\t wB$ is not empty, $\dim\O=\ell(w)+\rk(1-\t w)$, $\delta_i$'s are pairwise orthogonal positive roots and $\{\d_1,\ldots,\d_\ell\}$ is $\t$ invariant.
 By Theorem \ref{metodoesteso}, $\O$ is spherical (with $z(\O)=w$). We consider the various cases: if $\s=\tau g\in G:\<\tau$, $C(\s)$ stands for $C_G(\s)$. In each case we use the notation introduced in section \ref{classificazione}.

\medskip

\subsection{Type $A_n$, $n=2m$, $m\geq 1$.}

We take $G=SL(2m+1)$. In this case there is only one (class of) outer involution $\t$, the graph automorphism of $SL(2m+1)$, and $C(\t)=SO(2m+1)$. Then 
$$
\dim SL(2m+1)/SO(2m+1)=2m^2+3m
$$
which is the dimension of a Borel subgroup of $SL(2m+1)$.
We may take 
$$
x=\tau n_{\b_1}\cdots n_{\b_m}\in \tau w_0B.
$$
Then $x^2=1$ since $\tau (\b_k) ={\b_k}$ for each $k$, so that $x$ lies in $\O_\tau$.
 Since $\tau w_0=-1$, we get 
 $$
 \ell(w_0)+\rk(\t w_0-1)=\dim B
 $$ and we are done. 
 
 Hence
 \begin{center}
\vskip-20pt
$$
\begin{array}{|c||c|c|}
\hline
\O  &w(\O)&x\in \O\cap \tau Bw(\O)B \\ 
\hline
\hline
\begin{array}{c}
\tau
\end{array}
&\quad \displaystyle w_0=s_{\b_1}\cdots s_{\b_m}\quad&\quad\displaystyle \tau n_{\b_1}\cdots n_{\b_m}\quad \\
\hline
\end{array}
$$
\end{center}
\begin{center} Table \totable:  Outer involutions in $SL(2m+1)$,
$m\geq 1$.
\end{center}

\subsection{Type $A_n$, $n=2m-1$, $m\geq 2$.}

We take $G=SL(2m)$. In this case there are two (classes of)  outer involutions: $\t$ and $\tau x_{\beta_1}(1)$, with $C(\t)=Sp(2m)$, $C(\tau x_{\beta_1}(1))=C_{Sp(2m)}(x_{\beta_1}(1))$. We have
$$
\dim SL(2m)/Sp(2m)=2m^2-m-1
$$
We put $J=\{1,3,\ldots, n\}$, $w=w_0\wJ$. We have 
$$
\ell(w)=\ell(w_0)-\ell(\wJ)=m(2m-1)-m=2m^2-2m
$$
and
$$
\rk(\tau w-1)=\rk(\wJ+1)=m-1
$$
since $\tau w_0=-1$ and the (simple) roots in $J$ are pairwise orthogonal. Hence
$$
\ell(w)+\rk(\tau w-1)=2m^2-m-1=\dim SL(2m)/Sp(2m)
$$
We are left to exhibit a conjugate $x\in \tau BwB$ of $\tau$. For this purpose we distinguish 2 cases. 

Assume $m$ is even, $m=2r$. Then there are precisely $m$ positive roots $\gamma_1,\ldots,\gamma_m$ for which $w(\gamma)=-\gamma$, namely
$$
\g_{2i-1}=e_{2i-1}-e_{2m-2i+1}\quad, \quad
\g_{2i}=e_{2i}-e_{2m+2-2i}
$$
for $i=1,\ldots,r$ and
$$
w=s_{\g_1}\cdots s_{\g_m}
$$
We also note that $\tau$ exchanges $\g_{2i-1}$ and $\g_{2i}$ for each $i=1,\ldots,r$.
In this case we take 
$$
x=g\tau g^{-1}
$$
where $g$ is the involution $g=x_{-\g_1}(1)x_{-\g_3}(1)\cdots x_{-\g_{m-1}}(1)$. Then
$$
x=\tau x_{-\g_1}(1)x_{-\g_2}(1)x_{-\g_3}(1)\cdots x_{-\g_{m-1}}(1)x_{-\g_m}(1)\in \tau BwB
$$
and we are done.

If $m$ is odd, $m-1=2r$, then there are precisely $m-1$ positive roots $\gamma_1,\ldots,\gamma_{m-1}$ for which $w(\gamma)=-\gamma$, namely
$$
\g_{2i-1}=e_{2i-1}-e_{2m-2i+1}\quad, \quad
\g_{2i}=e_{2i}-e_{2m+2-2i}
$$
for $i=1,\ldots,r$ and
$$
w=s_{\g_1}\cdots s_{\g_{m-1}}
$$
We also note that $\tau$ exchanges $\g_{2i-1}$ and $\g_{2i}$ for each $i=1,\ldots,r$.
In this case we take 
$$
x=g\tau g^{-1}
$$
where $g$ is the involution $g=x_{-\g_1}(1)x_{-\g_3}(1)\cdots x_{-\g_{m-2}}(1)$. Then
$$
x=\tau x_{-\g_1}(1)x_{-\g_2}(1)x_{-\g_3}(1)\cdots x_{-\g_{m-2}}(1)x_{-\g_{m-1}}(1)\in \tau BwB
$$
and we are done.

We finally deal with $\tau x_{\beta_1}(1)$, $H=C(\tau x_{\beta_1}(1))=C_{Sp(2m)}(x_{\beta_1}(1))$. We have
$$
\dim SL(2m)/H=\dim SL(2m)/Sp(2m)+\dim \O'=2m^2-m-1+2m=2m^2+m-1
$$
where $\O'$ is the $Sp(2m)$-orbit of $x_{\beta_1}(1)$, which has dimension $2m$ (note that $\dim SL(2m)/H=\dim SL(2m)/SO(2m)$, and $SO(2m)$ is the centralizer of an outer involution of $SL(2m)$ if the characteristic is not 2). Therefore $\dim SL(2m)/H$ is the dimension of a Borel subgroup of $SL(2m)$.
 As in the case  when $n$ is even, we take $w=w_0$, 
$$
x=\tau n_{\b_1}\cdots n_{\b_m}\in \tau w_0B.
$$
 Then $x^2=1$ since $\tau(\b_k) =\b_k$ for each $k$, and for dimensional reasons, $x$ is conjugate to $\tau x_\beta(1)$.
 
 Hence  \begin{center}
\vskip-20pt
$$
\begin{array}{|c||c|c|c|}
\hline
\O  & w(\O) & x\in \O\cap \tau Bw(\O)B \\ 
\hline
\hline 
\begin{array}{c}
\tau
\end{array} & s_{\g_1}\cdots s_{\g_m} & \tau x_{-\g_1}(1)\cdots x_{-\g_m}(1)\\
\hline
\begin{array}{c}
\tau x_{\b_1}(1)
\end{array}
&   \quad\displaystyle w_0\quad&\tau n_0
\\
\hline
\end{array}
$$
\end{center}
\begin{center} Table \totable:  Outer involutions in $SL(2m)$,
$m\geq 2$, $m$ even.
\end{center}

and

 \begin{center}
\vskip-20pt
$$
\begin{array}{|c||c|c|c|}
\hline
\O  & w(\O) & x\in \O\cap \tau Bw(\O)B \\ 
\hline
\hline 
\begin{array}{c}
\tau
\end{array}  & s_{\g_1}\cdots s_{\g_{m-1}} & \tau x_{-\g_1}(1)\cdots x_{-\g_{m-1}}(1)\\
\hline
\begin{array}{c}
\tau x_{\b_1}(1)
\end{array}
& \quad\displaystyle w_0\quad&\tau n_0
\\
\hline
\end{array}
$$
\end{center}
\begin{center} Table \totable:  Outer involutions in $SL(2m)$,
$m\geq 3$, $m$ odd.
\end{center}

\medskip
\subsection{Type $D_n$, $n\geq 4$.}

To deal with $G$ of type $D_n$ we shall, as usual, consider $G=SO(2n)$. Then the outer involutions of $G$ are obtained by conjugation with involutions of $O(2n)$. Note that if  $n=4$,  and if $G$ is adjoint or simply-connected, then there are other outer involutions in $\Aut G$: however, they are conjugate in $\Aut G$.

Let $\tau$ be the involution of $O(2n)$ inducing the graph automorphism of $SO(2n)$, i.e. the graph automorphism acting trivially on $\<{X_{\pm\a_i}\mid i\in \{1,\ldots,n-2\}}$ and such that $x_{\a_{n-1}}(\xi)\leftrightarrow x_{\a_{n}}(\xi)$, $x_{-\a_{n-1}}(\xi)\leftrightarrow x_{-\a_{n}}(\xi)$ for $\xi\in k$.

The classes of involutions in $O(2n)\setminus SO(2n)$ correspond to partitions
$2^{k}\oplus 1^{2n-2k}$ for $k=1,\ldots,n$, odd $k$, with $\t$ corresponding to $2\oplus 1^{2n-2}$. Let $\O_k$ be the class corresponding to $2^{k}\oplus 1^{2n-2k}$.
From \cite{spal}, 2.9 b) we get
$$
\dim \O_k=\dim \O_{k, Sp(2n,\C)}-2n+\l_1^\ast
$$
where
$\l_1^\ast=c_1+c_2=(2n-2k)+k=2n-k$, 
hence
$$
\dim \O_k=k(2n-k+1)-2n+2n-k=k(2n-k)
$$
for $k=1,\ldots,n$, odd $k$.

Let
$$
\mu_1=e_1-e_{n}\quad,\quad
\nu_1=e_1+e_{n}\quad,\quad w=s_{\mu_1}s_{\nu_1}\quad.
$$
Then
$$
\ell(w)+\rk(1-\tau w)=2(n-1)+1=2n-1=\dim \O_1
$$
and $\t(\mu_1)=\nu_1$. We have
$$
n_{\nu_1}\tau n_{\nu_1}=\tau n_{\mu_1}n_{\nu_1}
$$
so that 
$$
\tau n_{\mu_1}n_{\nu_1}\in \O_1\cap wB
$$
To deal with the remaining classes, we put $m=\left[\frac{n}2\right]$ and
$$
\mu_i=e_{2i-2}-e_{2i-1}\quad,\quad
\nu_i=e_{2i-2}+e_{2i-1}\quad,\quad w_i=s_{\mu_1}s_{\nu_1}\cdots s_{\mu_i}s_{\nu_i}
$$
for $i=2,\ldots,m$.

Arguing as above, we can prove that
$$
\ell(w)+\rk(1-\tau w)=\dim \O_{2i-1}
$$
and
$$
\tau n_{\mu_1}n_{\nu_1}\cdots n_{\mu_i}n_{\nu_i}\in \O_{2i-1}\cap w_iB
$$
for  $i=2,\ldots,m$. In fact
it is enough to count the number of Jordan blocks of length 2 in $\tau n_{\mu_1}n_{\nu_1}\cdots n_{\mu_i}n_{\nu_i}$: in $\tau n_{\mu_1}n_{\nu_1}$ there is 1, and in $n_{\mu_i}n_{\nu_i}$ there are 2 for each $i=2,\ldots,m$.

If $n$ is even, then there are $\frac12n$ conjugacy classes of outer involutions and we are done. In particular the maximal one is $2^{n-1}\oplus 1^{2}$ and corresponds to $w=s_{\mu_1}s_{\nu_1}\cdots s_{\mu_m}s_{\nu_m}=w_0=-1$.

If $n$ is odd, then there are $\frac12(n+1)$ conjugacy classes of outer involutions: the maximal one is $\O_n=2^{n}$ which is the only one not in the previous list.  We have 
$$
\dim \O_n=n^2
$$
Let $n_0$ be any representative in $N$ of $w_0$ with $n_0$ of order 2 and commuting with $\tau$. Then $x=\tau n_0$ is an involution in $\tau w_0B$. Since 
$$
\ell(w_0)+\rk(1-\tau w_0)=n^2-n+n=n^2
$$
by Theorem \ref{metodoesteso}, we have $\dim B.x\geq n^2$. But $x$ is an involution in $O(2n)\setminus SO(2n)$, so that $\dim G.x\leq \dim \O_n=n^2$. Therefore  $x$ lies in $\O_n\cap \tau w_0B$ and we are done.

\begin{center}
\vskip-20pt
$$
\begin{array}{|c||c|c|c|c|}
\hline
\O  & w(\O) & x\in \O\cap \tau Bw(\O)B&\dim \O \\ 
\hline
\hline 
\begin{array}{c}
2^{2i-1}\oplus 1^{2n-4i+2}
\\
i=1,\ldots,m
\end{array}
& \quad\displaystyle s_{\mu_1}s_{\nu_1}\cdots s_{\mu_i}s_{\nu_i}\quad&\tau n_{\mu_1}n_{\nu_1}\cdots n_{\mu_i}n_{\nu_i}&(2i-1)(2n-2i+1)
\\
\hline
\end{array}
$$
\end{center}
\begin{center} Table \totable:  Outer involutions in $D_n$,
$n\geq 4$, $n=2m$.
\end{center}

\begin{center}
\vskip-20pt
$$
\begin{array}{|c||c|c|c|c|}
\hline
\O  & w(\O) & x\in \O\cap \tau Bw(\O)B&\dim \O \\ 
\hline
\hline 
\begin{array}{c}
2^{2i-1}\oplus 1^{2n-4i+2}
\\
i=1,\ldots,m
\end{array}
&   \quad\displaystyle s_{\mu_1}s_{\nu_1}\cdots s_{\mu_i}s_{\nu_i}\quad& \tau n_{\mu_1}n_{\nu_1}\cdots n_{\mu_i}n_{\nu_i}&(2i-1)(2n-2i+1)
\\
\hline
\begin{array}{c}
2^{n}
\end{array}
& \quad\displaystyle w_0\quad& \tau n_{0} &n^2\\
\hline
\end{array}
$$
\end{center}
\begin{center} Table \totable:  Outer involutions in $D_n$,
$n\geq 4$, $n=2m+1$.
\end{center}

 \medskip

\subsection{Type $E_6$.}\label{nane}

There are two (classes of) outer involutions: $\t$ and $\tau x_{\beta_1}(1)$, where $\tau$ is the graph automorphism of $G$.
We recall from \S \ref{nane} that
$$
\begin{array}{cclll}
\b_1 = (1,2,2,3,2,1),&
\b_2 =(1,0,1,1,1,1)\\
\b_3 =(0,0,1,1,1,0),&
\b_4 = (0,0,0,1,0,0)
\end{array}
$$
Note that each $\b_i$ is fixed by $\t$.

Let us start with $\tau x_{\beta_1}(1)$. 
We have $K=C(\tau x_{\beta_1}(1))\cong C_{F_4} (x_{\beta_1}(1))$, $\dim K=36$. Let $x=\tau n_{\b_1}n_{\b_2}n_{\b_3}n_{\b_4}
$. Since $x
$ is an involution
 in $\tau  w_0B$, with
$\ell(w_0)+\rk(\tau w_0-1)=36+6=\dim E_6/K$, it follows that
$
\tau x_{\beta_1}(1)\sim x$.

To deal with $\tau$, we put $\delta_1=(1, 1, 2, 2, 1, 1)$, $\delta_2=(1, 1, 1, 2, 2, 1)$. We have 
$$
\dim E_6/F_4=26
$$
Note that $\tau(\delta_1)=\delta_2$ and 
$$
\ell(s_{\d_1}s_{\d_2})+\rk(\tau  s_{\d_1}s_{\d_2}-1)=24+2=26
$$
In fact here $J=\{2,3,4,5\}$, $w=s_{\d_1}s_{\d_2}=w_0\wJ$.

We show that $\tau\sim \tau n_{\d_1}n_{\d_2}$.
Let $g=x_{-\d_1}(1)$. Then
$$
g\tau g^{-1}=\tau x_{-\d_1}(1)x_{-\d_2}(1)\in \tau BwB
$$
Moreover, since $[\tau, x_{-\d_1}(1)x_{-\d_2}(1)]=1$, we get
$$
x_{\d_1}(1)x_{\d_2}(1)\tau x_{-\d_1}(1)x_{-\d_2}(1)x_{\d_1}(1)x_{\d_2}(1)=\tau n_{\d_1}n_{\d_2}
$$
and we are done.
 \medskip
 We summarize in

 \begin{center}
\vskip-20pt
$$
\begin{array}{|c||c|c|c|}
\hline
\O  & w(\O) & x\in \O\cap \tau Bw(\O)B \\ 
\hline
\hline 
\begin{array}{c}
\tau
\end{array} & s_{\d_1}s_{\d_2} & \tau n_{\d_1}n_{\d_2}\\
\hline
\begin{array}{c}
\tau x_{\b_1}(1)
\end{array}
&   \quad\displaystyle w_0\quad&\tau n_{\b_1}n_{\b_2}n_{\b_3}n_{\b_4}
\\
\hline
\end{array}
$$
\end{center}
\begin{center} Table \totable:  Outer involutions in $E_6$.
\end{center}

\medskip
This completes the list of outer involutions of simple algebraic groups in characteristic 2.
We have proved that

\begin{theorem}\label{finalesimmetrico} 
Let $G$ be a reductive connected algebraic group in characteristic 2, and let $\s$ be any involutory automorphism of $G$. Then  the fixed point subgroup $H$ of $\s$ is a spherical subgroup of $G$.\cvd
\end{theorem}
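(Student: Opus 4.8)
\noindent
The plan is to reduce at once to the case where $G$ is simple and $\s$ stabilises a pair $(T,B)$, and then to split into inner and outer involutions. For the reduction, write $G=S\,G_1\cdots G_t$ with $S=Z(G)^\circ$ and $G_1,\dots,G_t$ the simple components; $\s$ fixes $S$ and induces on $\{1,\dots,t\}$ a permutation $\r$ that is a product of transpositions. On $S$ the fixed point subgroup is spherical because $S$ is a torus; on a pair $\{G_i,G_j\}$ swapped by $\r$, the fixed point subgroup of $\s$ is, up to a finite group, a twisted diagonal of dimension $\dim G_i$, and $C_{G_iG_j}(\s)\cdot B_iV_i^{\varphi}=G_iG_j$ forces $G_iG_j/C_{G_iG_j}(\s)$ to be spherical; taking products of spherical subgroups then reduces the statement to $G$ simple. (This is exactly the general remark at the beginning of this section.)

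For inner involutions, write $\s=\mathrm{int}(g)$ with $g^2=1$, so $H=C_G(g)$. By Lemma~\ref{generale}, $G/H$ is spherical iff the conjugacy class $\O_g$ is spherical. In characteristic $2$ an involution is unipotent (as $(g-1)^2=2(1-g)=0$), hence $\O_g$ is spherical by Theorem~\ref{finale}, equivalently by the type-by-type classification of spherical unipotent classes carried out in Section~\ref{classificazione}. This disposes of all inner involutions.

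Outer involutions occur only in types $A_n$ ($n\ge2$), $D_n$ ($n\ge4$) and $E_6$. Fix the graph automorphism $\t$ of order $2$ fixing $(T,B)$, and use the classification of $G$-orbits of outer involutions from \cite{spal} and \cite{AS}. For each such orbit $\O$ I would run the following scheme: choose $w=s_{\d_1}\cdots s_{\d_\ell}$ with $\d_1,\dots,\d_\ell$ pairwise orthogonal positive roots and $\{\d_1,\dots,\d_\ell\}$ a $\t$-invariant set, so that $U_w$ is $\t$-stable; exhibit an involution $x\in\O\cap B\t wB$, typically either $x=\t n_{\d_1}\cdots n_{\d_\ell}$ (when $\t(\d_i)=\d_i$) or a conjugate $g\t g^{-1}$ with $g=\prod x_{-\d_i}(1)$ taken over a $\t$-transversal of $\{\d_i\}$; and verify $\dim\O=\ell(w)+\rk(1-\t w)$, where $\dim\O$ is read off from the partition formulas of \cite{spal}, \S2.9 and $\rk(1-\t w)$ is computed from $\dim T^{\t w}=n-\rk(1-\t w)$. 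Theorem~\ref{metodoesteso} then yields that $\O$ is spherical with $z(\O)=w$, i.e. that $C_G(\s)$ is a spherical subgroup.

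The hard part is precisely this case-by-case step: simultaneously producing the right $w$ with a $\t$-invariant defining root set and the right involution representative, and checking that the two dimensions agree. In types $A_n$ and $D_n$ one leans on $\t w_0=-1$: for $A_{2m}$ one can take $\t$ itself with $w=w_0$, while for $A_{2m-1}$ and for $D_n$ the efficient choice involves pairs of roots swapped by $\t$, e.g. $\g_{2i-1}=e_{2i-1}-e_{2m-2i+1}$ and $\g_{2i}=e_{2i}-e_{2m+2-2i}$ in $A_{2m-1}$, or $\mu_i=e_{2i-2}-e_{2i-1}$ and $\nu_i=e_{2i-2}+e_{2i-1}$ in $D_n$; here one must keep $U_w$ $\t$-stable and confirm that $x$ lies in exactly the prescribed double coset rather than a smaller-dimensional class, for which the Bruhat-order estimate coming from (\ref{inclusion}) together with a count of the length-$2$ Jordan blocks of $x$ is used. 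For $E_6$ only the two classes of $\t$ (with $H=F_4$, via $\d_1=(1,1,2,2,1,1)$, $\d_2=(1,1,1,2,2,1)$) and of $\t x_{\b_1}(1)$ (with $H\cong C_{F_4}(x_{\b_1}(1))$, via $w_0$ and the $\b_i$) remain, and each reduces to a finite verification. Assembling the inner and outer cases with the reduction to simple groups gives the theorem.
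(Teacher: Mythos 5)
Your proposal follows essentially the same route as the paper: the same reduction to simple $G$ via the decomposition $G=S\,G_1\cdots G_t$ and the finite-intersection argument $C_{G_iG_j}(\s)\cap B_iV_i^{\varphi}$ for swapped factors, the same disposal of inner involutions via Theorem~\ref{finale} and Lemma~\ref{generale} (involutions being unipotent in characteristic $2$), and the same case-by-case treatment of outer involutions in types $A_n$, $D_n$, $E_6$ using Theorem~\ref{metodoesteso} with exactly the choices of $\t$-invariant orthogonal root sets and representatives that the paper uses. The only difference is that you leave the final double-coset and dimension verifications as ``finite verifications'' where the paper carries them out explicitly (e.g.\ the Jordan-block counts in $D_n$ and the conjugation computations in $E_6$), but the argument is the paper's.
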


We conclude with another application of Theorem \ref{metodoesteso}.

\medskip
\subsection{Type $G_2$ in $D_4$.}

We show briefly how one can prove that the subgroup of type $G_2$ in $D_4$ is spherical (in all characteristics). Let us assume $G$ of type $D_4$. Without loss of generality, we may assume $G$ is adjoint. Hence if we denote by $\f$ the graph automorphism of $G$ fixing $\a_2$ and mapping $\a_1\mapsto \a_3\mapsto \a_4\mapsto \a_1$, then the fixed point subgroup $K$ of $\f$ is of type $G_2$. Let
$\d_1=\a_1+\a_2+\a_3$,
$\d_2=\a_1+\a_2+\a_4$,
$\d_3=\a_2+\a_3+\a_4$, and let $w=w_0 s_2=s_{\d_1}s_{\d_2}s_{\d_3}$. We have 
$$
\ell(w)+\rk(1-\f w)=14=\dim D_4/G_2\quad.
$$
It remains to show that a $G$-conjugate of $\f$ lies in  $\f B w_0s_2B\subseteq G:\<\f$.

Let $g=x_{-\d_1}(\xi_1)x_{-\d_2}(\xi_2)x_{-\d_3}(\xi_3)$. Then
$$
g\f g^{-1}=\f x_{-\d_1}(-\xi_1-\xi_3)x_{-\d_2}(-\xi_2-\xi_1)x_{-\d_3}(-\xi_3+\xi_2)
$$
If we choose $\xi_1$, $\xi_2$, $\xi_3$ such that $\xi_1+\xi_3$, $\xi_2+\xi_1$ and $-\xi_3+\xi_2
$ are non-zero, then 
$g\f g^{-1}\in \f B w_0s_2B$ and we are done.

\begin{remark}
{\rm 
If the characteristic is zero, we may apply the arguments in \cite{Cos}. Since $T^{\f w}=H_{\a_2}$ is connected, it follows that, in the simply-connected case, the monoid $\l(D_4/G_2)$ of $B$-weights in $k[D_4/G_2]$ is generated by $\omega_1, \omega_3,\omega_4$, since $G_2$ is connected and it has no non-trivial characters, so that the monoid $\l(D_4/G_2)$ is free (and it contains $(1-\f w)P^+$ which is the monoid generated by $\omega_1+\omega_3$, $\omega_1+\omega_4$, $\omega_3+\omega_4$, $\omega_1+\omega_3+\omega_4$) (see also \cite{Kramer}).
}
\end{remark}


\begin{thebibliography}{1}

\bibitem{Ale}
{\sc A.\ V.\ Alekseevskii,}
\newblock{\em Component groups of centralizers of unipotent elements in semisimple algebraic groups,}
\newblock Trudy Tbiliss. Mat. Inst. Razmadze Akad. Nauk. Gruzin SSR 62, 5--27 (1979),
\newblock{Lie groups and invariant theory,}
\newblock Amer. Math. Soc. Transl. Ser. 2, 213, 
\newblock Amer. Math. Soc. Providence, RI (2005).


\bibitem{AS}{\sc M.\ Aschbacher, G.M.\ Seitz}
\newblock {\em Involutions in Chevalley groups over fields of even order,}
\newblock Nagoya Math. J., 63, 1--91 (1976).


\bibitem{bourbaki}{\sc N.\ Bourbaki,}
\newblock{\em \'El\'ements de Math\'ematique. Groupes et Alg\`ebres de Lie, Chapitres 4,5, et 6,}
\newblock Masson, Paris (1981). 


\bibitem{Brundan}
{\sc J.\ Brundan,}
\newblock{\em Dense orbits and double cosets,}
\newblock Algebraic groups and their representations (Cambridge, 1997),  259--274, NATO Adv. Sci. Inst. Ser. C Math. Phys. Sci.,
517, Kluwer Acad. Publ., Dordrecht, (1998).

\bibitem{CCC}{\sc N.\ Cantarini, G.\ Carnovale, M.\ Costantini,}
\newblock {\em Spherical orbits and representations of $\Ue$,}
\newblock Transformation\  Groups, 10, No.\ 1, 29--62  (2005).

\bibitem{Car}{\sc G.\ Carnovale,}
\newblock {\em Spherical conjugacy classes and involutions in the Weyl group,}
\newblock Math. Z. 260(1), 1--23 (2008)

\bibitem{Car2}{\sc G.\ Carnovale,}
\newblock {\em Spherical conjugacy classes and Bruhat decomposition,}
\newblock preprint ArXiv:0808.1818v2 (to appear in Ann. Inst. Fourier, Grenoble).

\bibitem{Giovanna-good}{\sc G.\ Carnovale,}
\newblock {\em A classification of spherical conjugacy classes 
in good characteristic,}
\newblock preprint ArXiv:0811.2641v2 (to appear in Pacific J. Math.).

\bibitem{Cuno}
{\sc M.\ Costantini}
\newblock {\em On the lattice automorphisms of certain simple
algebraic groups,}
\newblock Rend.\ Sem.\ Mat.\ Univ.\ Padova 90,
141--157 (1993). 

\bibitem{Cdue}
{\sc M.\ Costantini}
\newblock {\em The lattice automorphisms of simple
algebraic groups over ${\overline{\mathbb F}}_2$,}
\newblock Manuscripta Math. 91, 1--16 (1996).

\bibitem{Cos}{\sc M.\ Costantini,}
\newblock {\em On the coordinate ring of spherical conjugacy classes,}
\newblock Math. Z. on line (2009)

\bibitem{Carter1}
{\sc R.\ W.\ Carter,}
\newblock {\em Simple Groups of Lie Type,}
\newblock John Wiley (1989).

\bibitem{Carter2}
{\sc R.\ W.\ Carter,}
\newblock {\em Finite Groups of Lie Type,} 
\newblock John Wiley  (1985).

\bibitem{intersezioni}{\sc K.Y.\ Chan, J.-H.\ Lu, S.K.M.\ To,}
\newblock {\em On intersections of conjugacy classes and Bruhat cells,}
\newblock preprint ArXiv:0906.2254v1.


\bibitem{FR}{\sc R.\ Fowler, G.\ R\"ohrle}
\newblock{\em Spherical nilpotent orbits in positive characteristic,}
\newblock Pacific J. Math. 237, 241--286 (2008).
 

\bibitem{Hesselink}{\sc W.H.\ Hesselink,}
\newblock{\em Nilpotency in classical groups over a filed of characteristic 2,}
\newblock Math. Z. 54, 165--181 (1979).


\bibitem{Hu2}
{\sc J.E.\ Humphreys,}
\newblock {\em Linear Algebraic Groups,}
\newblock Springer-Verlag, New York (1995).


\bibitem{Iwa}
{\sc N.\ Iwahori,}
\newblock{\em Centralizers of involutions in finite Chevalley groups,}
\newblock In: ``Seminar on algebraic groups and related finite groups''.
LNM 131, 267--295, Springer-Verlag, Berlin Heidelberg New York (1970).


\bibitem{Knop}
{\sc F. \ Knop,}
\newblock{\em On the set of orbits for a Borel subgroup,}
\newblock Comment. Math. Helv.\ 70, 285--309 (1995).


\bibitem{Kramer}
{\sc M.\ Kr\"amer,}
{\it Sph\"arische Untergruppen in kompakten zusammenhangenden
Liegruppen,}
Compositio Math.\ {\bf 38} (1979),  129--153.

\bibitem{LLS}{\sc R.\ Lawther, M.W.\ Liebeck, G.M.\ Seitz}
\newblock {\em Fixed point spaces in actions of exceptional algebraic groups,}
\newblock Pacific J. Math. 205, 339--391 (2002).


\bibitem{Lus}{\sc G.\ Lusztig,}
\newblock{\em Remarks on Springer's representations,}
\newblock preprint ArXiv:0811.0370v1 (3 Nov 2008).

\bibitem{Matsuki}
{\sc T.\ Matsuki,}
\newblock{\em The orbits of affine symmetric spaces under the action of minimal parabolic subgroups,}
\newblock J. Math. Soc. Japan\ 31, 331--357 (1979).


\bibitem{pany3}{\sc D.\ Panyushev,}
\newblock {\em Complexity and rank of homogeneous spaces,}
\newblock Geom.\ Dedicata, 34, 249--269 (1990).

\bibitem{pany}{\sc D.\ Panyushev,}
\newblock {\em Complexity and nilpotent orbits,}
\newblock Manuscripta Math. 83, 223--237 (1994).


\bibitem{pany2}{\sc D.\ Panyushev,}
\newblock {\em On spherical nilpotent orbits and beyond,}
\newblock Ann.\ Inst.\ Fourier, Grenoble 49(5), 1453--1476 (1999).

\bibitem{pany5}{\sc D.\ Panyushev,}
\newblock {\em Some amazing properties of spherical nilpotent orbits,}
\newblock Math. Z., 245, 557--580 (2003).


\bibitem{Seitz}
{\sc G.M..\ Seitz,}
\newblock{\em Double cosets in algebraic groups,}
\newblock Algebraic groups and their representations (Cambridge, 1997),  214--257, NATO Adv. Sci. Inst. Ser. C Math. Phys. Sci.,
517, Kluwer Acad. Publ., Dordrecht, (1998).



\bibitem{spal}
{\sc J.N. Spaltenstein,}
\newblock{\em Classes unipotentes et sous-groupes de Borel,} 
\newblock LNM 946, Springer-Verlag, Berlin Heidelberg New York  (1982).


\bibitem{Springerinv}
{\sc T.A. Springer,}
\newblock{\em Some results on algebraic groups with involutions,}
\newblock Algebraic groups and related topics (Kyoto/Nagoya, 1983),  525--543, Adv. Stud. 
Pure Math. 6, North-Holland, Amsterdam, (1985).

\bibitem{springer}
{\sc T.A. Springer,}
\newblock{\em Linear Algebraic Groups,} Second Edition,
\newblock Progress in Mathematics 9, Birkh{\"a}user (1998).


\bibitem{yale}
\newblock{\sc R.\ Steinberg,}
\newblock{\em Lectures on Chevalley groups,}
\newblock{Yale University} (1967).

\bibitem{Vust}{\sc T. Vust,}
\newblock{\em Op\'eration de groupes r\'eductifs dans un type de c\^ones presques homog\`enes,}
\newblock Bull. Soc. Math. France \ 102, 317--333 (1974).

\bibitem{Wall}{\sc G.E. Wall,}
\newblock{\em On the conjugacy classes in the unitary, symplectic and orthogonal groups,}
\newblock J. Austral. Math. Soc. \ 3, 62--89 (1963).


\end{thebibliography}
\end{document}